\title{String graphs and separators}
\author{
{\sc Ji\v{r}\'{\i} Matou\v{s}ek}\thanks{Supported
by the  ERC Advanced Grant No.~267165
and by the project CE-ITI (GACR P202/12/G061).}
\\
   {\footnotesize Department of Applied Mathematics}\\[-1.5mm]
   {\footnotesize  Charles University, Malostransk\'{e} n\'{a}m. 25}\\[-1.5mm]
{\footnotesize  118~00~~Praha~1,
   Czech Republic, and}\\
{\footnotesize    Institute of  Theoretical Computer Science}\\[-1.5mm]
{\footnotesize    ETH Zurich,
      8092 Zurich, Switzerland}
}
\date{}
\newif\ifafour
\newtheorem{theorem}{Theorem}[section]
\newtheorem{lemma}[theorem]{Lemma}
\newtheorem{proposition}[theorem]{Proposition}
\newtheorem{observation}[theorem]{Observation}
\newtheorem{claim}[theorem]{Claim}
\newtheorem{exercise}[theorem]{Exercise}
\newtheorem{problemo}[theorem]{Problem}
\newtheorem{example}[theorem]{Example}
\newtheorem{fact}[theorem]{Fact}
\newcommand{\qed}{\hfill\ensuremath{\Box}}
\newenvironment{proof}{\noindent\textbf{Proof.}
}{\qed\par\medskip}
\newenvironment{proofof}[1]{\medskip\noindent\textbf{Proof of #1.}
}{\qed\par\medskip}
\newenvironment{proofhd}[1]{\noindent\textbf{#1.}}{\qed\par\medskip}
{\ifx&#1&%
  \begin{problemo}\else\begin{problemo}[#1]\fi\upshape}%
    {\end{problemo}}
\newcommand{\N}{\ensuremath{\mathds N}}
\newcommand{\R}{\ensuremath{\mathds R}}
\newcommand{\CC}{\ensuremath{\mathcal C}}
\newcommand{\MM}{\ensuremath{\mathcal M}}
\newcommand{\PP}{\ensuremath{\mathcal P}}
\newcommand\makevec[1]{{\boldsymbol{#1}}}
\def \bb {\makevec{b}}
\def \xx {\makevec{x}}
\def \yy {\makevec{y}}
\def \cc {\makevec{c}}
\def \bb {\makevec{b}}
\def \bzero {\makevec{0}}
\DeclareMathOperator{\IG}{IG}
\DeclareMathOperator{\CR}{cr}
\DeclareMathOperator{\PCR}{pcr}
\newcommand\RCR{\overline{\CR}}
\DeclareMathOperator{\OCR}{ocr}
\DeclareMathOperator{\SEG}{SEG}
\DeclareMathOperator{\STRING}{STRING}
\DeclareMathOperator{\CONV}{CONV}
\DeclareMathOperator{\DIR}{DIR}
\DeclareMathOperator{\vcong}{vcong}
\DeclareMathOperator{\econg}{econg}
\DeclareMathOperator{\vspars}{vspars}
\DeclareMathOperator{\espars}{espars}
\newcommand\ExSym{{\mbox{\bfseries E}}}
\newcommand\Ex[1]{\ExSym\hspace{-0.2ex}\left[#1\right]}
\newcommand{\Prob}[1]{{\rm Prob}\hspace{-0.2ex}\left[ #1 \right]}
\newcommand\dd[1]{\,\mathrm{d}#1}
\definecolor{jgray}{gray}{0.5}
\newcommand\grayhalf{{\color{jgray}\frac12}}
\newcommand\eps{\varepsilon}
\renewcommand\:{\colon}
\newcommand{\heading}[1]{\vspace{1ex}\par\noindent{\bf\boldmath #1}}
\newcommand\defi[1]{{\bf\boldmath #1}}
\def\immediateFigure#1{%
\smallskip\begin{center}#1\end{center}\smallskip }
\newcommand{\immfig}[1]  
{\immediateFigure{\mbox{\includegraphics{#1}}}}
\newcommand{\immfigw}[2] 
{\immediateFigure{\mbox{\includegraphics[width=#2]{#1}}}}
\newlength{\fparwidth}
\newlength{\myparindent}
\newcommand\lecture[4]{}
\newcommand\jm{}
\newcommand\localbib{\bibliographystyle{alpha}\bibliography{gg}}
\begin{document}
\maketitle
\begin{abstract} String graphs, that is, intersection graphs of curves 
in the plane, have been studied since the 1960s. We provide an expository
presentation of several results, including very recent ones:
some string graphs require an exponential number of crossings
in every string representation; exponential number is always sufficient;
string graphs have small separators; and
the current best bound on the crossing number of a graph in terms 
of pair-crossing number.
For the existence of small separators, the
proof includes generally useful
results on approximate flow-cut dualities.
\end{abstract}

This expository paper was prepared as a material for
two courses co-taught by the author in 2013, at  Charles University
and at ETH Zurich. It aims at a complete and streamlined
presentation of several results concerning 
\emph{string graphs}.
This important and challenging class of intersection graphs has
traditionally been studied at the Department of Applied 
Mathematics of the Charles University, especially by Jan Kratochv\'il
and his students and collaborators.

A major part of the paper is devoted to
a separator theorem by Fox and Pach, recently improved
by the author, as well as an application of it by T\'oth
in a challenging problem from graph drawing, namely,
bounding the crossing number by a function of the pair-crossing number.
This is an excellent example of a mathematical proof with a simple idea
but relying on a number of other results from different areas. 
The proof is presented in full, assuming very little as a foundation, 
so that the reader can see everything that is involved. 
A key step is an approximate flow-cut duality from combinatorial
optimization and approximation algorithms, whose proof
relies on linear programming duality and a theorem on metric embeddings.

\heading{Acknowledgments. } I am very grateful to Rado Fulek, Vincent Kusters,
Jan Kyn\v{c}l, and Zuzana Safernov\'a for proofreading, comments, 
and corrections. It was a pleasure to teach the courses together
with Pavel Valtr in Prague and with Michael Hoffmann and Emo Welzl
in Zurich, and to work with Jan Kratochv\'il on questions
in string graphs as well as on many other things. I also thank
an anonymous referee for numerous useful remarks and suggestions.



\lecture{??}{04}{2013}{String graphs, separators, and approximate duality}{\jm}\label{ch:string}

\section{Intersection graphs}\label{s:ig}

\heading{The classes $\IG(\MM)$. }
Let $\MM$ be a system of sets; we will typically consider systems of
geometrically defined subsets of $\R^2$, such as all segments 
in the plane. We define $\IG(\MM)$, the class of \defi{intersection graphs}
of $\MM$, by
\[
\IG(\MM)= 
\Big\{(V,E): V\subseteq\MM, 
E=\{\{M,M'\}\in {\textstyle {V\choose 2}}: M\cap M'\ne\emptyset\}\Big\}.
\]
In words, the vertices of each graph in $\IG(\MM)$ are
sets in $\MM$, and two vertices are connected by an
edge if they have a nonempty intersection. 

Usually we consider intersection graphs of $\MM$ up to isomorphism;
 i.e., we regard a graph $G$ as an intersection graph of $\MM$ if it is 
merely isomorphic to a graph $G'\in\IG(\MM)$. In that case
we call $V(G')\subseteq\MM$ an \defi{$\MM$-representation}
of $G$, or just a \defi{representation} of $G$ if $\MM$ is understood.

\heading{Important examples. }
\begin{itemize}
\item For $\MM$ consisting of all (closed) intervals on the real line,
we obtain the class of \defi{interval graphs}. This is one of the most
useful graph classes in applications. Interval graphs have several 
characterizations, they can be recognized in linear time, and 
there is even a detective story \emph{Who Killed the Duke of Densmore?}
by Claude Berge (in French; see \cite{densmore} for
English translation) in which the solution depends on properties
of interval graphs.
\item  \defi{Disk graphs}, i.e.,
intersection graphs of disks in the plane,
and \defi{unit disk graphs}
have been studied extensively. 
Of course, one can also investigate intersection
graphs of \emph{balls in $\R^d$} for a given $d$, or of \emph{unit balls}.
\item Another interesting class is $\CONV$, the intersection graphs
of convex sets in the plane. 
\item Here we will devote most of the time to the class $\STRING$
of \defi{string graphs}, the intersection graphs of simple curves in the
plane.
\item Another important class is
$\SEG$, the \defi{segment graphs}, which are the intersection graphs of
line segments in~$\R^2$.
\end{itemize}
Other interesting classes of graphs are obtained by placing various
restrictions on the mutual position or intersection
pattern of the sets representing the
vertices. For example:
\begin{itemize}
\item For an integer $k\ge 1$,
$k$-$\STRING$ is the subclass of string graphs consisting of
all graphs representable by curves such that every two of them
have at most $k$ points of intersection.\footnote{Some authors 
moreover require that each of the intersection points is
a \emph{crossing}, i.e., a point where, locally, one of the edges passes
from one side of the second edge to the other (as opposed
to a \emph{touching point}).}
\item For $k\ge 1$, the class $k$-$\DIR$ consists of the segment graphs
possessing a representation in which the segments involved have at most
$k$ distinct directions. (So $1$-$\DIR$ are just interval graphs.)
\item The \defi{kissing graphs
of circles}, sometimes also called \defi{contact graphs
of circles} or \defi{coin graphs},
 are disk graphs that admit a representation by disks
with disjoint interiors; that is, every two disks either are disjoint
or just touch. The beautiful and surprisingly useful 
\defi{Koebe--Andreev--Thurston theorem} asserts that 
a graph is a kissing graph of circles if and only if it is planar.
While ``only if'' is easy to see, the ``if'' direction is
highly nontrivial. Here we have just mentioned this gem of a result but
we will not discuss it any further.
\end{itemize}

\heading{Typical questions. } For each class $\CC$
of intersection graphs, and in particular,
for all the classes mentioned above, 
one can ask a number of basic questions. Here are some examples:
\begin{itemize}
\item How hard, computationally, is the \defi{recognition problem for $\CC$}?
That is, given an (abstract) graph $G$, is it isomorphic to a graph 
in $\CC$? For some classes, such as the interval graphs, polynomial-time
or even linear-time algorithms have been found, while for many other
classes the recognition problem has been shown NP-hard, and sometimes
it is suspected to be even harder (not to belong to NP).
\item How complicated a representation may be required for the graphs
in $\CC$?  In more detail, we first need to define some reasonable notion
of \defi{size} of a representation of a graph in $\CC$.
 Then we ask, given an integer $n$,
what is the maximum, over all $n$-vertex graphs  $G\in \CC$, of the
smallest possible size of a representation of $G$? 

For example, it is not too difficult to show that each segment graph
has a representation in which all of the segments have endpoints
with integral coordinates. For such a representation, the size
can be defined as the total number of bits needed for encoding
all the coordinates of the endpoints.
\item Can the chromatic number be bounded in terms of
the clique number? It is well known that there are
graphs $G$ with clique number $\omega(G)=2$, i.e., triangle-free,
and with chromatic number $\chi(G)$ arbitrarily large. 
On the other hand, many important classes, such as interval
graphs, consist of \emph{perfect graphs}, which satisfy
$\omega(G)=\chi(G)$. Some classes $\CC$ display an intermediate
behavior, namely, $\chi(G)\le f(\omega(G))$ for all $G\in\CC$
and for some function $f\:\N\to\N$; establishing a bound of this
kind is often of considerable interest, and then one may ask
for the smallest possible $f$. If $\chi(G)$ cannot be bounded
in terms of $\omega(G)$ alone, one may still investigate
bounds for $\chi(G)$ in terms of $\omega(G)$ and the
number of vertices of~$G$.
\item One may also consider two classes of interest, $\CC$ and $\CC'$,
and ask for inclusion relations among them (e.g., whether $\CC\subseteq\CC'$,
or $\CC'\subseteq\CC$, or even $\CC=\CC'$). Some relations are quite easy,
such as $\SEG\subseteq\CONV\subseteq \STRING$, but others may be
very challenging. 

For example, Scheinermann conjectured in his PhD.~thesis
in 1984 that all planar graphs are in $\SEG$. It took over
20 years until Chalopin, Gon{\c{c}}alves, and Ochem \cite{1string}
managed to prove the weaker result that all planar graphs are
in $1$-$\STRING$, and in 2009 Chalopin and Gon{\c{c}}alves 
\cite{ChalopinGoncalves} finally established Scheinermann's conjecture.
\end{itemize}
Thus, it should already be apparent from our short lists of classes 
and questions that the study of intersection graphs is an area in which 
it is very easy to produce problems (and exercises). However, instead
of trying to survey the area, we will focus on a small number of 
selected results. Some of them also serve us as a stage
on which we are going to show various interesting tools in action.

\begin{exercise}\label{ex:seg-integral}
 Prove carefully an assertion made above:
every $\SEG$-graph has a representation with all segment endpoints
integral. (Hint: check the definition of $\SEG$ again and note
what it does \emph{not} assume.)
\end{exercise}

\begin{exercise} Show that graphs in $100$-$\STRING$ can be recognized in
NP.
\end{exercise}

\section{Basics of string graphs}

We begin with a trivial but important observation: all of the complete
graphs $K_n$ are string graphs. Hence, unlike classes such as planar
graphs, string graphs can be dense and they have no forbidden minors.
Moreover, they are not closed under taking minors;
thus, the wonderful Robertson--Seymour theory is not applicable.

Another simple observation asserts that every planar graph
is a string graph, even $2$-$\STRING$. The following picture
indicates the proof:
\immfig{j-pl-2string}
As we have mentioned, every planar graph is even a segment graph,
but this is a difficult recent result~\cite{ChalopinGoncalves}.

\begin{example}\label{ex:k5sd}
It is not completely easy to come up with an example of a non-string
graph. Here is one:
\immfig{j-k5sd}
(more generally, every graph obtained from a non-planar graph
by replacing each edge by a path of length at least two is non-string).
\end{example}

\begin{proofhd}{Sketch of proof}
For contradiction we suppose that this graph has 
a representation by simple curves (referred
to as \emph{strings} in this context), where each $v_i$
is represented by a string $\gamma_{i}$ and $v_{ij}$
is represented by $\gamma_{ij}$. From such a representation we will
obtain a planar drawing of $K_5$, thus reaching a contradiction. 

To this end, we first select,
for each $i<j$, a piece $\pi_{ij}$ of $\gamma_{ij}$ connecting a point
of $\gamma_i$ to a point of $\gamma_j$ and otherwise disjoint from
$\gamma_i$ and $\gamma_j$. Next, we continuously shrink each $\gamma_i$
to a point, pulling the $\pi_{ij}$ along---the result is the promised
planar drawing of~$K_5$. The picture shows this construction in the vicinity
of the string $\gamma_1$:
\immfig{j-vshrink}
\end{proofhd}

Admittedly, this argument is not very rigorous, and if the strings are
arbitrary curves, it is difficult to specify the construction precisely.
An easier route towards a rigorous proof hinges on the following
generally useful fact.

\begin{lemma}\label{l:PL}
 Every (finite) string graph $G$ can be represented by
polygonal curves, i.e., simple curves consisting of finitely
many segments. We may also assume that every two curves have finitely
many intersection points, 
and that no point belongs to three or more curves.
\end{lemma}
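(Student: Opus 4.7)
The plan is to start from an arbitrary string representation $\gamma_1,\ldots,\gamma_n$ (each $\gamma_i$ a continuous injective map from $[0,1]$ to the plane), fix a \emph{witness} of every intersection, and then approximate each curve by a polygonal one that is forced to keep those witnesses and stays close enough not to create or destroy any intersections. Concretely, for every pair $i<j$ with $\gamma_i\cap\gamma_j\ne\emptyset$ I pick a single point $p_{ij}\in\gamma_i\cap\gamma_j$, and I set
\[
\delta \;=\; \min\{\,\mathrm{dist}(\gamma_i,\gamma_j)\: i\ne j,\ \gamma_i\cap\gamma_j=\emptyset\,\}.
\]
Since each $\gamma_i$ is a compact subset of $\R^2$, we have $\delta>0$ (or the minimum is over an empty set, in which case we pick any $\delta>0$).

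For a fixed $i$, the witness points lying on $\gamma_i$ correspond to finitely many parameter values $0\le t_1<t_2<\cdots<t_k\le 1$. On each subinterval $[t_\ell,t_{\ell+1}]$, I would partition finely and connect the images of the partition points by line segments to obtain a polygonal arc from $\gamma_i(t_\ell)$ to $\gamma_i(t_{\ell+1})$. By uniform continuity of $\gamma_i$, a sufficiently fine partition makes this polygonal arc lie within Hausdorff distance $\eps<\delta/3$ of $\gamma_i([t_\ell,t_{\ell+1}])$. Concatenating these arcs yields a polygonal curve $\gamma_i'$ that passes through every $p_{ij}$ lying on $\gamma_i$, in the correct order. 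Simplicity of $\gamma_i'$ (at least after choosing $\eps$ small enough) follows from the fact that the injective continuous map $\gamma_i$ is a homeomorphism onto its compact image, so $\gamma_i^{-1}$ is uniformly continuous: any self-crossing of $\gamma_i'$ would force two parameter values that are far apart to have images close together, contradicting the modulus of continuity of $\gamma_i^{-1}$.

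The resulting curves $\gamma_1',\ldots,\gamma_n'$ now represent the same graph: if $\gamma_i\cap\gamma_j\ne\emptyset$ then $p_{ij}\in\gamma_i'\cap\gamma_j'$ by construction, and if $\gamma_i\cap\gamma_j=\emptyset$ then
\[
\mathrm{dist}(\gamma_i',\gamma_j')\;\ge\;\mathrm{dist}(\gamma_i,\gamma_j)-2\eps\;>\;0.
\]
This already gives a polygonal representation. To secure the last two clauses of the lemma, I would apply one final generic perturbation of the vertices of the $\gamma_i'$, small enough not to change the intersection pattern (using the same positive-distance argument for disjoint pairs, and keeping the witnesses $p_{ij}$ fixed for intersecting pairs, or moving them continuously while staying on both curves). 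Under a generic choice, no segment of $\gamma_i'$ is collinear with a segment of a different $\gamma_j'$, so any two curves cross in only finitely many points; and no three curves pass through a common point, since the set of vertex choices producing a triple coincidence has positive codimension.

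The main obstacle, in my view, is Step~3: producing a polygonal approximation that simultaneously (i) interpolates the prescribed witness points, (ii) preserves the simplicity of the original curve, and (iii) is close enough in the Hausdorff metric to keep disjoint curves disjoint. Items (i) and (iii) are standard once one invokes uniform continuity, but (ii) is the delicate part, and it is the place where one must really use that $\gamma_i$ is an injective continuous map on a compact interval, hence a homeomorphism onto its image with a uniformly continuous inverse. The final generic-perturbation step is routine by dimension counting in the space of vertex positions.
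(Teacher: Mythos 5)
Your approach is the same as the paper's: pick one witness point per intersecting pair, replace each string by a polygonal curve through its witness points that stays in a small neighborhood of the original, and then perturb to get finitely many intersections and no triple points. The paper's own proof is only a sketch, and you fill in more detail, so the comparison really comes down to the one step where you go beyond the paper.

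That step is the simplicity of the polygonal approximation, and there the argument has a genuine gap. You claim that a self-crossing of the piecewise-linear interpolant $\gamma_i'$ ``would force two parameter values that are far apart to have images close together,'' contradicting uniform continuity of $\gamma_i^{-1}$. But this dichotomy does not close: a self-crossing can occur between two segments that are \emph{close} in the partition (say the $k$-th and $(k+2)$-th), and for such a pair the parameter separation is only on the order of the mesh $h$. To derive a contradiction you would need, roughly, $\omega_{\gamma_i^{-1}}\bigl(2\omega_{\gamma_i}(h)\bigr) < 2h$, but by definition of moduli of continuity one always has $\omega_{\gamma_i^{-1}}(\omega_{\gamma_i}(h)) \ge h$, so refining the mesh does not help. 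And indeed a simple arc that zig-zags at every scale can have piecewise-linear interpolants that self-intersect for every mesh, because a coarse sample can ``cut across'' a fine fold. The paper sidesteps this by simply \emph{asserting} that some simple polygonal curve through the witness points exists inside the $\eps/2$-tube; that assertion is true, but it is a nontrivial plane-topology fact (an approximation version of the Schoenflies theorem for arcs, or a careful inductive construction inside a chain of small disks), not something the naive interpolation argument delivers. So the structure of your proof matches the paper, but the one place you tried to be more rigorous than the sketch is exactly the place that needs a different idea.
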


\begin{proofhd}{Sketch of proof} We start from an arbitrary string 
representation of $G$. By compactness, there exists an $\eps>0$
such that every two disjoint strings in the representation
have distance at least $\eps$. For every two strings $\gamma,\delta$ that
intersect, we pick a point $p_{\gamma\delta}$ in the intersection.
Then we replace each string $\gamma$ by a polygonal curve
that interconnects all the points $p_{\gamma\delta}$ and
lies in the open $\frac\eps2$-neighborhood of~$\gamma$.

By a small perturbation of the resulting polygonal curves
we can then achieve finitely many intersections and eliminate
all triple points.
\end{proofhd}

\begin{exercise} Let $U\subseteq\R^2$ be an open, arcwise connected set;
that is, every two points of $U$ can be connected by a simple curve in~$U$.
Prove, as rigorously as possible, that every two points of $U$
can also be connected by a polygonal curve.
\end{exercise}

Let us call a string representation as in Lemma~\ref{l:PL}
\defi{standard}.

\section{String graphs requiring exponentially many intersections}
\label{s:string-exp}

\heading{How hard is to recognize string graphs?}
Using an ingenious reduction, Kratochv\'il \cite{kratoch-string-hard}
proved that recognizing string graphs is NP-hard, but the question
remained, does this problem belong to the class NP? 

A natural way of showing membership of the problem in NP would be 
to guess a string representation, and verify in
polynomial time that it indeed represents a given graph~$G$.
A simple way of specifying a string representation is to 
put a vertex into every intersection point of the strings, and describe the
resulting plane (multi)graph:
\immfig{j-stri-pl}
In this description, the edges are labeled by the strings they come from.
Then it can be checked whether such a plane graph indeed provides
a string representation of~$G$. 

This argument may seem to prove membership
in NP easily, but there is a catch: namely, we would need to know
that there is a polynomial $p(n)$ such that every string graph
on $n$ vertices admits a string representation with at most $p(n)$
intersection points. However, as was noticed in \cite{km-exponential},
this is false---as we will prove below,
there are string graphs for which every
representation has \emph{exponentially many intersections}.
After this result, for ten years it was not clear whether
there is \emph{any algorithm at all} for recognizing string graphs.

\heading{Weak realizations. } As an auxiliary device, we introduce
the following notions. An \defi{abstract topological graph}
is a pair $(G,R)$, where $G$ is an (abstract) graph and $R\subseteq
{E(G)\choose 2}$ is a symmetric relation on the edge set. A \defi{weak
realization} of such $(G,R)$ is a drawing of $G$ in the plane
such that whenever two edges $e,e'$ intersect (sharing a vertex
does not count), we have $\{e,e'\}\in R$.
Thus, $R$ specifies which pairs of edges are allowed (but not forced)
to intersect.

We call a weak realization 
\defi{standard} if the corresponding drawing
of $G$ is \defi{standard}, by which we mean that
the edges are drawn as polygonal curves, every two
intersect at finitely many points, and no three edges have a common 
intersection (where sharing a vertex does not count).
(Moreover, as in every graph drawing we assume that the edges
do not pass through vertices.)
Standard drawings help us to get rid of ``local'' difficulties in proofs.

\begin{exercise}\label{ex:wPL}
{\rm (a)}  Prove that if $(G,R)$ has a weak realization, then
it also has a standard weak realization.
(This is analogous to Lemma~\ref{l:PL}, but extra care is needed
near the vertices!)

(b) Prove that if $(G,R)$ has a weak realization $W$ with finitely
many edge intersections in which no three edges have a common
intersection, then it also has a  standard weak realization $W'$ with 
at most as many edge intersections as in~$W$.
\end{exercise}

For a string graph $G$, let $f_s(G)$ denote the minimum number of 
intersection points in a standard string representation of $G$, and let 
\[
f_s(n):=\max\{f_s(G): G\mbox{ a string graph on $n$ vertices}\}.
\]
Similarly, for an abstract topological graph $(G,R)$ admitting a weak
realization, let $f_w(G,R)$ be the minimum number of edge intersections in
a standard weak realization
 of $(G,R)$, and 
\[
f_w(m):=\max\{f_w(G,R): (G,R)\mbox{ weakly realizable, } |E(G)|=m\}.
\]

\begin{observation}\label{o:fwfs}
 $f_w(m)\le f_s(2m)$.
\end{observation}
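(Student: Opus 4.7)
The plan is to construct, for each weakly realizable abstract topological graph $(G,R)$ with $|E(G)|=m$, a string graph $H=H(G,R)$ on $2m$ vertices satisfying $f_s(H)\ge f_w(G,R)$; since $f_s(H)\le f_s(2m)$ by definition, taking the maximum over $(G,R)$ then gives the claimed inequality. The vertices of $H$ are the half-edges $(e,v)$ with $v\in e\in E(G)$, so $|V(H)|=2m$. Its edges are of three kinds: (i)~$\{(e,u),(e,v)\}$ for each $e=\{u,v\}\in E(G)$; (ii)~$\{(e,v),(e',v)\}$ whenever two distinct edges of $G$ share the vertex $v$; and (iii)~$\{(e,u),(e',u')\}$ for every $\{e,e'\}\in R$ with $e\ne e'$ and every admissible $u\in e$, $u'\in e'$.

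First I would check that $H$ is indeed a string graph, so that $f_s(H)$ is defined. Starting from any weak realization of $(G,R)$, I split each edge $e=\{u,v\}$ at its midpoint into two arcs $\alpha_e^u,\alpha_e^v$, which serve as the strings of $(e,u)$ and $(e,v)$; adjacencies of types (i) and (ii) come for free, and those of type (iii) can be installed by small local perturbations in the spirit of Exercise~\ref{ex:wPL}. The core of the argument is the reverse inequality $f_s(H)\ge f_w(G,R)$: given any standard string representation of $H$ with $k$ intersection points, I will produce a standard weak realization of $(G,R)$ with at most $k$ edge crossings. For each $e=\{u,v\}$ pick one point $q_e\in\alpha_e^u\cap\alpha_e^v$ (nonempty by (i)), to serve as the midpoint of $e$. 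For each vertex $u\in V(G)$ the strings $\{\alpha_e^u:e\ni u\}$ pairwise intersect by~(ii), so their union $C_u$ is compact and connected in the plane; inside $C_u$ choose a finite subtree $T_u$ that meets every $\alpha_e^u$ with $e\ni u$ while avoiding each $q_e$, and then collapse $T_u$ to a single point $P_u$. The edge $e=\{u,v\}$ of $G$ is then drawn as the concatenation of the surviving arc of $\alpha_e^u$ from $P_u$ to $q_e$ with the surviving arc of $\alpha_e^v$ from $q_e$ to $P_v$.

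I expect the main obstacle to be the topological bookkeeping around each collapse. Since every $T_u$ is a compact contractible subset of $\R^2$, the quotient $\R^2/T_u$ is homeomorphic to $\R^2$, so the output drawing still lives in the plane; what has to be verified is that (a)~every intersection between two strings $\alpha_e^u,\alpha_{e'}^u$ sharing the vertex $u$ is absorbed into $P_u$ and hence ceases to be an edge crossing, and (b)~every surviving crossing is inherited from a distinct intersection point of the original string representation, yielding the bound $k$. A surviving crossing necessarily occurs between half-edges lying in two different trees $T_u$ and $T_{u'}$, and for those two strings to intersect in the representation of $H$ at all, rule (iii) forces $\{e,e'\}\in R$; hence the resulting drawing is a valid weak realization of $(G,R)$, and a final perturbation via Exercise~\ref{ex:wPL}(b) makes it standard without adding crossings.
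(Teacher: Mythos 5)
Your construction encodes each edge of $G$ as a pair of half-edges, yielding exactly $2m$ vertices of $H$ and thereby avoiding the paper's preliminary reduction to connected non-planar $G$ (which gives $n\le m$, hence $n+m\le 2m$ vertices for its auxiliary graph with one string per vertex of $G$ and one per edge). The idea is appealing, but there are two real gaps.

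First, you never establish that $H$ is a string graph. Type~(iii) demands that \emph{all four} half-arc pairs between $e$ and $e'$ intersect whenever $\{e,e'\}\in R$. But $R$ lists \emph{allowed}, not \emph{forced}, crossings: if $e$ and $e'$ do not cross in the chosen weak realization, no ``small local perturbation'' makes four far-apart arc pairs meet; and even a single crossing of $e$ with $e'$ yields only one of the four required intersections. This piece could be repaired by including a type~(iii) edge only for the specific half-arc pair containing an actual crossing of a fixed weak realization, but that is a different graph from the one you defined.

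Second, and more seriously, the collapse step is unsound. For the output to be a weak realization of $(G,R)$, each $T_u$ must simultaneously (a) be connected and meet every $\alpha_e^u$ with $e\ni u$; (b) \emph{contain} every intersection point of $\alpha_e^u\cap\alpha_{e'}^u$ with $e,e'\ni u$ distinct, since a surviving such point becomes a crossing of $e$ and $e'$, which merely share $u$ and need not lie in $R$; (c) avoid every $q_e$; and (d) avoid every point of $C_u$ lying on a string outside $C_u$, or else some edge would pass through $P_u$ and the trees $T_u$, $T_{u'}$ would overlap. Conditions (b) and (d) can genuinely conflict. For instance, if $\alpha_{e_1}^u\cap\alpha_{e_2}^u=\{p_1,p_2\}$ and a crossing with $\alpha_{e_1}^{v}$ (the other half of $e_1$) separates $p_1$ from $p_2$ along $\alpha_{e_1}^u$, with a similar obstruction along $\alpha_{e_2}^u$, then no connected $T_u$ can contain both $p_1$ and $p_2$ while avoiding the obstructions. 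The paper sidesteps all of this by giving each vertex of $G$ its own tiny string: distinct vertex strings are non-adjacent in its $H$, hence disjoint in \emph{every} string representation, so the contraction never meets overlapping clusters, never forces an edge through a vertex, and never creates an illegal crossing between edges sharing an endpoint.
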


\begin{proof} Let $(G,R)$ be an abstract topological graph 
with $m$ edges witnessing
$f_w(m)$. We may assume that $G$ is connected and non-planar (why?),
and thus $m\ge n=|V(G)|$. 

We consider a (standard)
weak realization $W$ of $(G,R)$ with $f_w(m)$ intersections,
and construct a string representation of a string graph $H$ as follows:
we replace every vertex in $W$ by a tiny \emph{vertex string},
and every edge by an \emph{edge string}, as is indicated below:
\immfig{j-fwfs}
This $H$ has $m+n\le 2m$ vertices, and using the monotonicity
of $f_s$, it suffices to show that
$f_s(H)\ge f_w(m)$. This follows since a string representation of $H$ 
with $x$ intersections yields
a weak realization of $(G,R)$ with at most $x$ intersections,
by contracting the vertex strings
to points and pulling the edge strings along (this is the same argument
as in Example~\ref{ex:k5sd}). 
\end{proof}

\begin{exercise}\label{ex:fsfw} Prove that
$f_s(n) \le f_w(n^2)+n^2$. (Or $f_s(n)\le f_w(O(n^2))+O(n^2)$
if this looks easier.)
\end{exercise}

\begin{theorem}\label{t:expo}
 There is a constant $c>0$ such that
$f_w(m)\ge 2^{cm}$, and consequently, $f_s(n)\ge 2^{(c/2)n}$.
\end{theorem}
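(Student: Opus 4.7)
The plan is to construct, for each integer $k\ge 0$, a weakly realizable abstract topological graph $(G_k,R_k)$ with $m_k=O(k)$ edges such that every standard weak realization has at least $2^k$ edge crossings. Once this is done, the first bound $f_w(m)\ge 2^{cm}$ follows by choosing $k$ of order $m$. The second inequality is then immediate from Observation~\ref{o:fwfs}: $f_s(n)\ge f_s(2m)\ge f_w(m)\ge 2^{cm}$ with $n=2m$ gives $f_s(n)\ge 2^{(c/2)n}$.

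The construction proceeds by induction on $k$. For the base case $k=0$, I would take a small gadget with a distinguished pair of edges $\alpha_0,\beta_0$ that must cross at least once in every weak realization; such a gadget can be extracted from a non-string configuration like the one in Example~\ref{ex:k5sd}, setting $R_0$ so that the only pair of edges permitted to cross is $\{\alpha_0,\beta_0\}$. The inductive step is a \emph{doubling gadget}: given $(G_{k-1},R_{k-1})$ with a distinguished pair $\alpha_{k-1},\beta_{k-1}$ forced to cross at least $N:=2^{k-1}$ times, I attach a constant number of new vertices and edges and extend $R$ by a constant number of new pairs, producing a new distinguished pair $\alpha_k,\beta_k$ whose endpoints are placed on opposite sides of a small \emph{frame} built around $\alpha_{k-1}\cup\beta_{k-1}$. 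The relation $R_k$ is arranged so that $\alpha_k$ (resp.\ $\beta_k$) may cross only $\beta_{k-1}$ (resp.\ $\alpha_{k-1}$) among the old edges, plus the single pair $\{\alpha_k,\beta_k\}$.

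The topological heart of the argument is that in any weak realization, $\alpha_{k-1}\cup\beta_{k-1}$ together with the frame partitions the relevant planar region into at least $N$ faces arranged in a prescribed cyclic pattern (this is where the $2^{k-1}$ inductive crossings of $\alpha_{k-1}$ with $\beta_{k-1}$ are actually used). Because of where the endpoints of $\alpha_k,\beta_k$ sit and because $R_k$ forbids nearly all other crossings, both $\alpha_k$ and $\beta_k$ are trapped into threading through every such face; a Jordan-curve/parity argument then forces $\alpha_k$ and $\beta_k$ to cross each other at least twice within each face, yielding $\ge 2N = 2^k$ crossings in total. To see that $(G_k,R_k)$ really is weakly realizable, I would exhibit an explicit realization by drawing the gadget concretely (a zig-zag of polygonal edges), which simultaneously shows that $2^k$ crossings suffice for the intended realization.

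The main obstacle is the topological forcing step: one has to rule out \emph{every} conceivable weak realization, not merely the intended ones, which requires careful bookkeeping of which curves can enter which faces and with what parity. Everything else is routine accounting: each inductive step adds $O(1)$ edges and $O(1)$ pairs to the relation, so $m_k=O(k)$, and the doubling compounds to $2^k$ forced crossings, giving $f_w(m)\ge 2^{cm}$ with $c = \Omega(1/(\text{edges added per step}))$.
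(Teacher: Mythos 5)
Your high-level plan — build a recursive gadget that doubles the forced crossing count while adding $O(1)$ edges and $O(1)$ allowed pairs, then pass from $f_w$ to $f_s$ via Observation~\ref{o:fwfs} — matches the shape of the actual proof. But the crux, which you yourself flag as ``the main obstacle,'' is left entirely open, and as stated the route through it does not obviously close. Your inductive hypothesis gives only a \emph{count}: $\alpha_{k-1}$ and $\beta_{k-1}$ cross at least $N=2^{k-1}$ times. From a count alone you cannot conclude that the union together with a frame ``partitions the relevant planar region into at least $N$ faces arranged in a prescribed cyclic pattern.'' Two arcs crossing $N$ times can do so in many topologically inequivalent ways (nested, clustered, interleaved), and nothing in the inductive hypothesis pins down the arrangement. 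Forcing the arrangement is exactly the hard part of the theorem, and your sketch assumes it rather than proving it. You would also need to justify why $\alpha_k,\beta_k$ are ``trapped into threading through every face'' and why the parity gives \emph{two} crossings per face rather than zero or one; these are nontrivial Jordan-curve claims that require the rigidity you have not established.

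The paper avoids this difficulty by a different inductive scheme. It fixes a planar $3$-connected skeleton $P_k$ (so the drawing is topologically unique), a single fixed edge $\{a,b\}$, and a family of extra edges $\{u_i,v_i\}$ each allowed to cross only prescribed dashed edges. The invariant is that $\{u_i,v_i\}$ crosses the \emph{fixed} edge $\{a,b\}$ at least $2^{i-1}$ times. For the inductive step one shows (using 3-connectedness) that $\{u_i,v_i\}$ must cross $\{v_{i-1},v_{i-1}'\}$, and then one manufactures from the drawing of $\{u_i,v_i\}$ two \emph{disjoint} curves $\pi_1,\pi_2$, each of which, combined with the remaining edges, is a valid way to draw $\{u_{i-1},v_{i-1}\}$ in a weak realization of $G_{i-1}$. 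Applying the inductive hypothesis to each $\pi_j$ gives $2\cdot 2^{i-2}=2^{i-1}$ crossings, with no need to control the cyclic pattern of earlier crossings. This sidesteps the structural bookkeeping your approach would require: the doubling comes from producing two disjoint ``surrogates'' for the previous edge, not from counting faces. Unless you can supply the rigidity argument (and it is not clear the frame idea, as described, provides it), your proof has a genuine gap at its center. You would be well served to track crossings against a fixed planar edge rather than between two evolving curves.

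A minor point: in the deduction of the second inequality you write ``$f_s(n)\ge f_s(2m)\ge f_w(m)$''; the first inequality is spurious. What you want is simply to take $n=2m$ and apply Observation~\ref{o:fwfs} directly: $f_s(n)=f_s(2m)\ge f_w(m)\ge 2^{cm}=2^{cn/2}$.
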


\begin{proof} For $k\ge 1$, we define a planar graph $P_k$ according
to the following picture:
\immfig{j-km1}
($P_{k}$ is obtained from $P_{k-1}$ by adding vertices $u_{k}$
and $v_{k}$ to the left and right of $u_{k-1}$, respectively, and
adding the vertical edges $\{u_{k},u'_k\}$
and $\{v_{k},v'_k\}$). Then we create an abstract topological
graph $(G_k,R_k)$ from $P_k$: $G_k$ is obtained from $P_k$ by adding the
edges $\{u_1,v_1\},\ldots,\{u_k,v_k\}$, and the relation $R_k$ allows
each of the edges $\{u_i,v_i\}$ to intersect
 all of the edges drawn dashed in the
picture above.
No other edge intersections are permitted.

Each $(G_k,R_k)$ has a weak realization:
\immfig{j-km2}
We prove by induction on $i$ that in every weak realization
of $G_k$, the edge $\{u_i,v_i\}$ intersects $\{a,b\}$
at least $2^{i-1}$ times, $1\le i\le k$; 
then the theorem will follow.

Since $P_k$ is a  3-connected graph, it has a topologically
unique drawing. From this the case $i=1$ can be considered obvious.
For $i\ge 2$, the situation for the edge $\{u_i,v_i\}$ looks, after
contracting the edge $\{u_{i-1},u_{i-1}'\}$ and
a simplification preserving the topology, as follows:
\immfig{j-km3}
Thus, the edge $\{u_i,v_i\}$ has to cross $\{v_{i-1},v'_{i-1}\}$. 

Now we will use the drawing of $\{u_i,v_i\}$ to get two different curves
$\pi_1,\pi_2$
that both ``duplicate'' the previous edge $\{u_{i-1},v_{i-1}\}$.
The first curve $\pi_1$ starts at $u_{i-1}$ and follows $\{u_{i-1},u_i\}$
up to the point where $\{u_i,v_i\}$ intersects $\{u_{i-1},u_i\}$
the last time before hitting $\{v_{i-1},v'_{i-1}\}$ (that point
can also be $u_i$). 
Then $\pi_1$ follows $\{u_i,v_i\}$ almost up to the first intersection
with $\{v_{i-1},v'_{i-1}\}$, and finally, it goes very near 
$\{v_{i-1},v'_{i-1}\}$ until $v_{i-1}$:
\immfig{j-km4}
If we remove the drawings of the
edges $\{u_j,v_j\}$, $j\ge i-1$, from the considered weak realization
of $G_k$, and add $\pi_1$ 
as a new way of drawing  the edge $\{u_{i-1},v_{i-1}\}$,
 we obtain a weak realization of  $G_{i-1}$. Therefore, by the
inductive hypothesis, $\pi_1$ crosses $\{a,b\}$ at least $2^{i-2}$ times.

Similarly we construct $\pi_2$, disjoint from $\pi_1$,
which starts at $u_{i-1}$ and first follows $\{u_{i-1},v_i\}$.
It again has to cross $\{a,b\}$
at least $2^{i-2}$ times, and the induction step is finished.
\end{proof}

\section{Exponentially many intersections suffice}

The first algorithm for recognizing string graphs was provided
by Schaefer and \v{S}tefankovi\v{c} \cite{SchaeferStefankovic},
who proved an upper bound on the number of intersections
sufficient for a representation of every $n$-vertex
string graph. Similar to the previous section, their proof
works with weak representations.

\begin{theorem}[\cite{SchaeferStefankovic}]\label{t:exp-ub}
We have $f_w(m)\le m2^m$. Consequently (by Exercise~\ref{ex:fsfw}),
$f_s(n)=2^{O(n^2)}$.
\end{theorem}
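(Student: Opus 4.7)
The plan is a crossing-minimization argument combined with a pigeonhole on parity vectors. Take a weak realization $W$ of $(G,R)$ using the minimum possible number of edge intersections; I will show that in $W$ no single edge has more than $2^{m-1}$ crossings with the other edges. Summing over the $m$ edges and dividing by two (each crossing is shared by two edges) then gives $f_w(G,R)\le m\cdot 2^{m-2}\le m\cdot 2^m$.

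Fix an edge $e$ and parametrize it from one endpoint to the other. Let $c_1,c_2,\ldots,c_k$ be the sequence of other edges encountered as crossings along $e$, with multiplicity. For each $i\in\{0,1,\ldots,k\}$ define the \emph{prefix parity vector}
\[
  p_i \in \{0,1\}^{E(G)\setminus\{e\}}, \qquad p_i(f)=|\{j\le i:c_j=f\}|\bmod 2.
\]
If $k\ge 2^{m-1}$, then the $k+1>2^{m-1}$ vectors $p_0,\ldots,p_k$ lie in a set of size $2^{m-1}$, so by pigeonhole $p_a=p_b$ for some $0\le a<b\le k$. Letting $x$ and $y$ be the points on $e$ immediately after the $a$-th and $b$-th crossings respectively, the sub-arc $e[x,y]$ crosses every other edge an even number of times.

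The heart of the proof is then a re-routing step: replace $e[x,y]$ by a new curve $\alpha$ from $x$ to $y$ that produces strictly fewer total edge intersections, contradicting the minimality of $W$. The intuition is that, since $e[x,y]$ meets every other edge evenly, crossings can be paired up and eliminated via successive Whitney-style moves on innermost bigons, each bigon being bounded by a sub-arc of $e$ and a sub-arc of some other edge $f$ that meet in two consecutive crossings. Each such move removes a matched pair of $e$-$f$ crossings without introducing new ones (because the bigon's interior contains no further bigon-able structure), so iterating on innermost bigons along $e[x,y]$ eventually yields a redrawing with a smaller crossing total.

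The main obstacle will be making the Whitney-move step fully rigorous. One has to verify that an innermost bigon between $e$ and some other edge exists whenever $e[x,y]$ still crosses some edge at least twice, that performing the move on an innermost bigon genuinely decreases the total crossing count (i.e., no new crossings with third edges are created in the process), and that the rerouted $\alpha$ avoids all vertices of $G$. These issues should be manageable using the standardness of $W$ (polygonal edges, finitely many crossings, no triple points) together with small perturbations; endpoint corner cases $a=0$ or $b=k$ are handled similarly. Once this lemma is established, the bound on $k$ follows and the theorem is immediate.
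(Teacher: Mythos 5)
Your parity-vector pigeonhole is exactly Lemma~\ref{l:subedge} of the paper, locating a sub-arc $\hat e$ of $e$ that contains a crossing and crosses every other edge an even number of times. (You count $m-1$ symbols rather than $m$, giving the threshold $2^{m-1}$ rather than $2^m$; this is a negligible improvement.) The genuine content of the theorem is the re-routing step, and there you have a real gap, not a deferred technicality.

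The trouble with innermost bigon moves is that they control only the two arcs bounding the bigon. An $e$-$f$ bigon that is innermost in the usual sense (no further $e$-$f$ crossings inside) can still be threaded by other edges $g$, and pushing $e$ across it trades the two $e$-$f$ crossings for new $e$-$g$ crossings along $f$'s side of the bigon; the net change need not be negative, for instance when $g$ meets $f$'s arc several times while avoiding $e$'s arc. Your parenthetical, that the innermost bigon has ``no further bigon-able structure'', tacitly requires a bigon whose interior is disjoint from \emph{all} other edges, and you neither prove that such a bigon exists under the even-crossing hypothesis nor indicate how the even parity would be used to find one; this is exactly where the difficulty lives. The same distinction between controlling one crossing count while another grows is what the paper warns about after Lemma~\ref{l:onecross}.

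The paper avoids local moves entirely. It deforms the plane so that $\hat e$ is the horizontal diameter of a small disk (the window), with all crossings of $\hat e$ appearing as short vertical chords. For each $f$ the arcs between consecutive window-boundary intersections are labelled odd and even by position along $f$; the window interior is erased, and the odd connectors are mapped inside by a circular inversion followed by a reflection in $\hat e$. This one global transformation preserves disjointness among the odd connectors, reconnects every $f$ consistently, and halves the number of times each $f$ meets the window boundary---this is where evenness is used. Finally $\hat e$ is re-drawn along whichever boundary half-circle it meets less often, giving at most $\sum_f n_f$ crossings in place of the original $2\sum_f n_f$. The operation reroutes all edges through the window simultaneously and does not factor through a sequence of crossing-decreasing bigon moves; to finish your proof you would need to replace your bigon paragraph with this window surgery or an argument of comparable global reach.
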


This result 
implies, by the argument given at the beginning of 
Section~\ref{s:string-exp}, that string graphs can be recognized in NEXP
(nondeterministic exponential time). 

Later Schaefer, Sedgwick, and \v{S}tefankovi\v{c} \cite{string-np} 
proved that string graphs can even be  recognized in NP. The main idea of their
ingenious argument is that, even though a string representation may
require exponentially many intersections, there is always a
representation admitting a compact encoding,
of only polynomial size, by something like a context-free grammar.
They also need to show that, given such a compact encoding of a 
collection of strings, one can verify in polynomial time whether it 
represents a given graph. We will not discuss their proof any further
and we proceed with a proof of Theorem~\ref{t:exp-ub}.

Let $(G,R)$ be a weakly realizable abstract topological graph with
$m$ edges. It has a standard weak realization 
(edges are polygonal curves with finitely
many intersections, and no triple intersections;
see Exercise~\ref{ex:wPL}).  Moreover, we can make sure that 
the edges  cross at every intersection point,
since a ``touching point'' \includegraphics{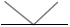} can
be perturbed away: \includegraphics{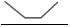} (note the advantage
of working with weak realizations, in which we need not worry
about losing intersections). 

Theorem~\ref{t:exp-ub} is an immediate consequence of the
following claim: \emph{if $W$ is a standard weak realization
in which some edge $e$ has at least $2^m$ crossings,
 then there is another standard weak realization $W'$ with fewer crossing 
than in~$W$.}

\begin{lemma}\label{l:subedge} If an edge $e$ has at least $2^m$ crossings,
then there is a contiguous segment $\hat e$ of $e$ that contains at
least one crossing and that crosses every edge of $G$ an \emph{even}
number of times.
\end{lemma}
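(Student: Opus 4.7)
The plan is to use a pigeonhole argument on parity vectors attached to prefixes of $e$. First I would order the crossings on $e$ as $x_1,x_2,\ldots,x_N$ along the curve, where $N\ge 2^m$. Between consecutive crossings pick intermediate points $p_0,p_1,\ldots,p_N$ on $e$, with $p_0$ the starting endpoint, $p_N$ the terminal endpoint, and $p_k$ strictly between $x_k$ and $x_{k+1}$ for $1\le k\le N-1$ (standardness guarantees only finitely many crossings and no triple points, so this is harmless). To each $p_k$ associate a parity vector $c_k\in\{0,1\}^{E(G)\setminus\{e\}}$, whose coordinate at an edge $e'\ne e$ records the parity of the number of crossings of $e'$ with the sub-arc of $e$ from $p_0$ to $p_k$.

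Next I would apply pigeonhole. There are $N+1\ge 2^m+1$ such vectors, but only $2^{m-1}$ possible values of $c_k$, since $|E(G)\setminus\{e\}|=m-1$. Hence there exist indices $0\le i<j\le N$ with $c_i=c_j$. I would then take $\hat e$ to be the sub-arc of $e$ from $p_i$ to $p_j$.

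Finally I would verify that $\hat e$ does the job. For every edge $e'\ne e$, the number of crossings of $\hat e$ with $e'$ equals the difference between the two prefix counts; its parity is $c_j(e')-c_i(e')=0$, so it is even. For the edge $e$ itself, $\hat e$ crosses $e$ zero times because $e$ is drawn as a simple (non-self-crossing) curve, and zero is even. Since $j>i$, the sub-arc $\hat e$ contains at least the crossing $x_{i+1}$, so it has at least one crossing as required.

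There is no real obstacle beyond counting: once the right pigeonhole target (parity vectors indexed by edges other than $e$) is identified, the bound $2^m$ is actually generous. The only point requiring mild care is to choose the $p_k$ strictly between crossings so that each prefix contains a well-defined set of crossings, and this is why the lemma is stated for standard weak realizations in the first place.
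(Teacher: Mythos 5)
Your proof is correct and follows essentially the same route as the paper: associate to each prefix of $e$ (cut between consecutive crossings) a parity vector over the other edges, then apply pigeonhole to find two equal vectors and take the sub-arc between them. The paper packages this as a standalone combinatorial statement about words over an $m$-letter alphabet, while you work directly on the curve and use the slightly sharper count $2^{m-1}$ (over the $m-1$ edges other than $e$); you also make explicit the small point that $\hat e$ trivially crosses $e$ itself zero times — all fine, but not a different argument.
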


\begin{proof} The lemma is an immediate consequence of the
following combinatorial statement: \emph{If $w$ is a word
(finite sequence) of length $2^m$ over an $m$-letter alphabet
$\Sigma$, then there is a nonempty subword (contiguous subsequence)
$x$ of $w$ in which each symbol of $\Sigma$ occurs an even number
of times.}

To prove this statement, 
let us define, for  $i=0,1,\ldots,2^m$,
 a mapping $f_i\:\Sigma\to \{0,1\}$, where $f_i(a)=0$
if $a$ occurs an even number of times among the first $i$ symbols of $w$,
and $f_i(a)=1$ otherwise.
Since there are only $2^m$ distinct mappings $\Sigma\to \{0,1\}$,
there are two indices $i\ne j$ with $f_i=f_j$. Then the subword
of $w$ beginning at position $i+1$ and ending at position $j$
is the desired~$x$.
\end{proof}

Now we fix $e$ and $\hat e$ as in the lemma. 
We can deform the plane
by a suitable homeomorphism so that $\hat e$ is a horizontal straight
segment and there is a narrow band along it,
which we call the \emph{window}, in which the edges crossing $\hat e$
appear as little vertical segments, and in which no other portions
of the edges are present:
\immfig{j-win1}
Any edge $f\ne e$ has an even number $2n_f$ crossings with $\hat e$,
and it intersects the border of the window in $4n_f$ points.
Let us number these $4n_f$ points as $p_{f,1},\ldots,p_{f,4n_f}$
in the order as they appear along $f$
(we choose one of the two possible directions of traversing $f$
arbitrarily). 

Here is the procedure for redrawing the original weak representation
$W$ into $W'$ so that the total number of crossings is reduced.
\begin{enumerate}
\item We apply a suitable homeomorphism of the plane that maps
the window to a circular disk with $\hat e$ as the horizontal diameter,
while the edges crossing $\hat e$ still appear as vertical segments
within the window. For every $f$ and every $i=1,2,\ldots,2n_f-1$,
the point $p_{f,2i}$ is connected to $p_{f,2i+1}$ by an arc of $f$
outside the window. Let us call these arcs for $i$ odd 
the \emph{odd connectors} and for $i$ even
the \emph{even connectors}.  The following illustration
shows only one edge $f$, although the window may be intersected by
many edges. The points $p_{f,i}$ are labeled only by their indices,
and the odd connectors are drawn thick:
\immfig{j-win2}
\item We erase everything inside the window. Then we map the odd connectors
inside the window by the circular inversion that maps the outside
of the window to its inside,
 while the even connectors
stay outside. Crucially, two odd connectors that did not intersect
before the circular inversion still do not intersect.
Next, we apply the mirror reflection
about $\hat e$ inside the window to the odd connectors:
\immfig{j-win3}
As the picture illustrates, these transformed odd connectors together
with the original even connectors connect up the initial piece of $f$
to the final piece. This new way of drawing of $f$ crosses the window
$n_f$ times, only half of the original number. This is the moment
where we use the fact that each edge crosses $\hat e$ an even number of times;
otherwise, the re-connection of $f$ would not work.

After this redrawing of all the edges crossing $\hat e$,
edges that did not cross before still do not cross (while some
intersections may be lost).
\item It remains to draw the erased portion of $e$. We do not want to
draw it horizontally, since we would have no control over the intersections
with the transformed odd connectors. Instead, we draw it along the top
or bottom half-circle bounding the window, whichever gives a smaller
number of intersections (breaking a tie arbitrarily). 
\immfig{j-win4}
Each $f$ crosses the window border $2n_f$ times after the redrawing,
and thus one of the half-circles is crossed at most
$\sum_f n_f$ times---while originally $\hat e$ was crossed $2\sum_f n_f$
times. Hence the redrawing indeed reduces the number
of crossings. The resulting weak realization
is not necessarily standard, but we can make
it standard without increasing the number of intersections
(Exercise~\ref{ex:wPL}(b)), and Theorem~\ref{t:exp-ub} is proved.
\end{enumerate}

\section{A separator  theorem for string graphs}

Let $G$ be a graph. A subset $S\subseteq V(G)$ is called
a \defi{separator} if there is a partition of $V(G)\setminus S$
into disjoint subsets $A$ and $B$ such that there are no edges
between $A$ and $B$ and $|A|,|B|\le \frac23|V(G)|$.

\begin{exercise} Check that the above definition of a separator
is equivalent to requiring all connected components of $G\setminus S$ 
to have at most $\frac 23|V(G)|$ vertices.
\end{exercise}

\begin{exercise}{\rm (a)} Show that every tree has a one-vertex separator.

{\rm (b)} We could also define a $\beta$-separator, for 
$\beta\in (0,1)$, by replacing $\frac23$ in the above definition
by $\beta$. Check that for $\beta<\frac23$, there are trees with
no one-vertex $\beta$-separator. From this point of view,
the value $\frac23$ is natural. (For most applications, though,
having  $\beta$-separators for a constant $\beta<1$ is sufficient,
and the specific value of $\beta$ is not too important.)
\end{exercise} 

\emph{Separator theorems} are results asserting that all graphs
in a certain class have ``small'' separators (much smaller than
the number of vertices). They have lots of applications, 
and in particular, they are the basis of many efficient divide-and-conquer
algorithms.

Probably the most famous separator theorem, and arguably one of the
nicest and most useful ones, is the \defi{Lipton--Tarjan separator theorem
for planar graphs}, asserting that \emph{every planar graph on $n$
vertices has a separator of size $O(\sqrt n\,)$.}

\begin{exercise}
 Show that the $m\times m$ square grid has no
separator of size $m/4$. Thus, the order of magnitude in
the Lipton--Tarjan theorem cannot be improved.
\end{exercise}

The separator theorem has several proofs (let us mention a simple
graph-theoretic proof by Alon et al. \cite{ast-ps-94}
and a neat proof from the Koebe--Andreev--Thurston theorem mentioned
in Section~\ref{s:ig}; see, e.g., \cite{pa-cg-95}). There are
scores of generalizations and variations. For example,
every class of graphs with a fixed excluded minor admits
$O(\sqrt n\,)$-size separators \cite{ast-ps-94}.

Here we focus on a separator theorem for string graphs.
Of course, in this case we cannot bound the separator size 
by a sublinear function of the number of vertices
(because $K_n$ is a string graph); for this reason,
the bound is in terms of the number of \emph{edges}.

\begin{theorem}\label{t:strsep}
Every string graph with $m\ge 2$ edges has a separator with
$O(\sqrt m\,\log m)$ vertices.
\end{theorem}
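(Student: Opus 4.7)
The plan is to reduce the separator problem to finding a sparse balanced vertex-cut in $G$, bound the latter via an approximate flow--cut duality (as promised in the abstract), and invoke the planarity of a string representation as the geometric input.

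First, I would apply Lemma~\ref{l:PL} to fix a standard polygonal representation of $G$. For each edge $\{u,v\}\in E(G)$ I choose one crossing point $p_{uv}\in \gamma_u\cap\gamma_v$, and form the planar multigraph $\Pi$ whose vertices are the $m$ points $p_{uv}$ and whose edges are the maximal arcs of the strings $\gamma_v$ between consecutive chosen crossings. Then $\Pi$ has $m$ vertices and $O(m)$ edges, and it comes with a planar embedding inherited from the string drawing.

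Second, I would set up the vertex-capacitated concurrent multicommodity flow on $G$: each ordered pair of vertices carries unit demand, each vertex has capacity $1$, and the flow must use paths in $G$. Call the maximum feasible uniform scaling factor $F^\ast(G)$. On the cut side, define the vertex sparsity $\phi(G)$ as the minimum of $|S|\,n^2/(|A|\,|B|)$ over separations $V(G)=A\sqcup S\sqcup B$ with no edges between $A$ and $B$; any balanced cut with sparsity $\phi$ and $|A|,|B|\ge n/3$ yields a separator of size $O(\phi)$. The approximate flow--cut duality (Linial--London--Rabinovich / Leighton--Rao, proved via LP duality and Bourgain's $\ell_1$-embedding of finite metrics, to be developed later in the paper) gives $\phi(G)\le O(\log n)\cdot F^\ast(G)\cdot n^2$ up to normalization.

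Third, and this is the geometric heart, I would bound $F^\ast(G)=O(\sqrt m/n^2)$ using the planar representation. A unit of flow routed along a path $u=w_0,w_1,\ldots,w_\ell=v$ in $G$ is realised in the plane as a curve that travels along $\gamma_{w_0}$ to $p_{w_0w_1}$, switches to $\gamma_{w_1}$, and so on, so a concurrent flow becomes a family of planar curves whose cumulative traffic on each string is controlled by the vertex capacity on that string's vertex in $G$. Because $\Pi$ is a planar graph with $O(m)$ edges, the Lipton--Tarjan separator theorem gives an $O(\sqrt m)$-vertex separator of $\Pi$ for any assignment of weights; applying this with weights proportional to the flow passing through each $p_{uv}$, one obtains an obstruction showing that the total routed flow cannot exceed $O(\sqrt m)$ units, whence $F^\ast(G)=O(\sqrt m/n^2)$. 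Combining with the previous step yields $\phi(G)=O(\sqrt m\,\log n)$, and hence a balanced separator of size $O(\sqrt m\,\log m)$ since $n\le m+1$ (we may assume $G$ connected without loss of generality).

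The principal obstacle is the third step. Turning a fractional concurrent flow in $G$ into planar curves whose joint ``mass'' is truly governed by the planarity of $\Pi$ is delicate: the vertex capacities in $G$ must be transferred correctly to capacities in $\Pi$ (a string used by many flow paths may be entered and exited at many of its crossings), and the Lipton--Tarjan separator of $\Pi$, which balances arrangement mass, must be shown to obstruct the flow rather than merely the arrangement. This is precisely where both planarity and the geometry of strings enter in a nonobvious way, and where the missing $\log$ factor (compared with Lipton--Tarjan for planar graphs) is unavoidable in the current approach.
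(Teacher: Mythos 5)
Your overall architecture matches the paper's: fix a standard polygonal representation, pass to vertex sparsity, invoke the approximate vertex flow--cut duality (proved via LP duality plus Bourgain's embedding and the Feige--Hajiaghayi--Lee rounding), and then do a geometric step bounding the inverse vertex congestion by $O(\sqrt m/n^2)$. Up to normalization your $F^\ast(G)$ is exactly the paper's $1/\vcong(G)$, and steps one and two are essentially what the paper does.

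The genuine gap is precisely where you flag it: your third step. You propose to form the planar arrangement multigraph $\Pi$ on the $m$ crossing points, apply Lipton--Tarjan to $\Pi$ with flow-dependent vertex weights, and infer that the routed flow in $G$ is $O(\sqrt m)$. As stated this does not go through. A unit of vertex capacity at $w\in V(G)$ is spread across all arcs and crossing points of the single string $\gamma_w$, so the map from a flow in $G$ to congestion on $\Pi$ is not capacity-preserving; one string visited by many flow paths can enter and exit at many different crossings $p_{uv}$, and nothing forces the induced traffic at any one crossing of $\Pi$ to be bounded by the capacity of $w$. Conversely, a Lipton--Tarjan separator of $\Pi$ is a balanced set of $O(\sqrt m)$ crossing points, which does not in any obvious way produce a vertex cut of $G$ or certify an upper bound on concurrent flow in $G$. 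You correctly call this ``delicate'' and ``nonobvious,'' but that is exactly the content of the theorem; the argument, as written, asserts the conclusion rather than proving it.

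The paper avoids this entirely with a different geometric mechanism (Proposition~\ref{p:str-vcong}): it never builds $\Pi$ or invokes planar separators. Instead it turns a low-vertex-congestion flow $\varphi$ on $G$ into a random drawing of $K_n$, where the edge $\{u,v\}$ is drawn by choosing $P_{uv}\in\PP_{uv}$ with probability $\varphi(P_{uv})$ and tracing the corresponding concatenation of strings. Independence of the choices and the congestion bound give $\Ex{X}\le 8m\,\vcong(G)^2$ for the number of crossing edge pairs $X$, while $\PCR(K_n)=\Omega(n^4)$ (Lemma~\ref{l:pcr-Kn}, from the $K_5$ fact / Hanani--Tutte) forces $X=\Omega(n^4)$, yielding $1/\vcong(G)=O(\sqrt m/n^2)$ cleanly. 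This sidesteps the capacity-transfer problem your approach runs into, and it is the step you would need to replace or carry out in detail to complete your proof.

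One smaller point: your sparsity definition $|S|n^2/(|A||B|)$ and the claim that a sparse cut with $|A|,|B|\ge n/3$ gives a separator of size $O(\phi)$ skips the recursion needed when the min-sparsity cut is unbalanced. The paper handles this via Exercise~\ref{ex:vsep}, which requires the sparsity bound to hold for every large induced subgraph, not just for $G$ itself; you would need the same uniformity.
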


The first separator theorem for string graphs, with
a worse bound of $O(m^{3/4}\sqrt{\log m}\,)$, was proved
by Fox and Pach \cite{fox2010separator}. The improvement
to $O(\sqrt m\,\log m)$ was obtained while preparing this text,
and it was published in a concise form in~\cite{mat-stringsep}.

The proof, whose exposition will occupy most of the
rest of this chapter, is a remarkable chain of diverse ideas
coming from various sources. 

Fox and Pach conjectured that the theorem should hold with
$O(\sqrt m\,)$. This, if true, would be asymptotically optimal:
we already know this for graphs with $n$ vertices and $O(n)$
edges, since every planar graph is
a string graph, but asymptotic optimality
 holds for graphs with any number of edges
between $n$ and~$n\choose 2$.

The separator theorem shows that string graphs are (globally)
very different from typical (not too dense) 
random graphs and, more generally, from expanders. 

In the next section, we demonstrate a surprising use of
Theorem~\ref{t:strsep}; for a number of other applications
we refer to  \cite{fox2010separator,foxpach-stringappl}.
Then we start working on the proof of Theorem~\ref{t:strsep}
in Section~\ref{s:approxdual}.

\section{Crossing number versus pair-crossing number}

Now something else: we will discuss crossing numbers in this section.
The \defi{crossing number} $\CR(G)$ of a graph $G$ is the smallest possible
number of edge intersections  (crossings) in a standard drawing of $G$.
We recall that in a standard drawing, 
edges are polygonal lines with finitely many intersections and no triple
points; see Section~\ref{s:string-exp}. In this section we consider
only standard drawings.

One may also consider
the \defi{rectilinear crossing number} $\RCR(G)$, which is the minimum
number of crossings in a straight-edge drawing of $G$, but this behaves
very differently from $\CR(G)$, and the methods involved in its study are
also different from those employed for the crossing number.

\heading{An algorithmic remark. }
The crossing number is an extensively studied and difficult graph parameter.
Let us just mention in passing that computing $\CR(G)$ is 
known to be NP-complete, but a
tantalizing open problem is, how well it can be approximated 
(in polynomial time). On the one hand, there is a constant $c>1$
such that $\CR(G)$ is hard to approximate within a factor of $c$ 
\cite{cabello-crhard}, and on the other hand, there is a (highly complicated)
algorithm \cite{chuzhoy2011algorithm} with approximation factor
roughly $n^{9/10}$ for $n$-vertex graphs with maximum degree bounded
by a constant. The latter result may not look very impressive, but it is the
first one breaking a long-standing barrier of $n$. 

A difficult case here are graphs with relatively small, but not too
small, crossing number, 
say around~$n$. Indeed, on the one hand, for every fixed $k$, there
is a linear-time algorithm deciding whether $\CR(G)\le k$
\cite{kawarabayashi2007computing,grohe-cr}. On the other hand,
for a graph with maximum degree bounded by a constant and with crossing
number $k$, a drawing with at most $O((n+k)(\log n)^2)$ crossings
can be found in polynomial time; this is based on \cite{LeightonRao},
with improvements of \cite{evenguhas,ARV} (also see \cite{ChuMaSi}).

\heading{The single-crossing lemma. }
Here is a useful basic fact about drawings minimizing the crossing number.

\begin{lemma}[Single-crossing lemma]
\label{l:onecross} In every (standard) drawing of $G$ 
that minimizes the crossing number, no two edges intersect more than
once.
\end{lemma}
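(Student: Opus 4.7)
The plan is to argue by contradiction via the standard "swap" trick. Suppose $D$ is a crossing-minimum standard drawing and that two edges $e, e'$ meet in at least two crossing points. Among all such crossing points, pick a pair $p, q$ that are \emph{consecutive along $e$}, meaning that the sub-arc $\alpha$ of $e$ with endpoints $p$ and $q$ contains no other intersection point with $e'$ in its interior. Let $\alpha'$ be the sub-arc of $e'$ with endpoints $p$ and $q$ (either of the two, if $e'$ also returns).

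The key step is the swap. Form a new curve $\tilde e$ by following $e$ from its endpoint to $p$, then following $\alpha'$ from $p$ to $q$, and finally following the rest of $e$ from $q$ to its other endpoint; form $\tilde e'$ analogously by interchanging the roles of $\alpha$ and $\alpha'$. Now I would bookkeep crossings with every other edge $f \notin \{e,e'\}$: the set of intersection points of $f$ with $\alpha \cup \alpha'$ is unchanged, only relabeled between $e$ and $e'$, so the total number of crossings of $f$ with $\{\tilde e,\tilde e'\}$ equals that of $f$ with $\{e,e'\}$. Crossings among edges not in $\{e,e'\}$ are untouched. At $p$ and $q$ themselves, the two edges now meet tangentially rather than crossing, so these two former crossings can be perturbed away (using the touch-to-no-touch perturbation already exploited in the proof of Theorem~\ref{t:exp-ub}), reducing the total crossing count by~$2$.

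The main obstacle is that $\tilde e$ and $\tilde e'$ need not be \emph{simple} curves: the inserted arc $\alpha'$ may intersect the retained part of $e$ (and similarly for $\tilde e'$), and $\tilde e$ and $\tilde e'$ may also acquire new mutual crossings. I would handle this by a standard shortcutting argument: at every self-intersection of $\tilde e$, cut out the loop between the two visits, which yields a simple polygonal curve with the same endpoints and can only \emph{decrease} the number of intersections with every other edge (including $\tilde e'$). After doing this for both $\tilde e$ and $\tilde e'$ and reapplying the perturbation of Exercise~\ref{ex:wPL}(b) to restore the standard position, we obtain a standard drawing of~$G$ with strictly fewer crossings than $D$, contradicting its minimality. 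Hence no two edges cross more than once.
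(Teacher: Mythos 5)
Your proof is correct and takes essentially the same route as the paper: choose crossings consecutive along $e$, swap the two sub-arcs, observe that the former crossings become tangencies that can be perturbed away while crossings with other edges are merely relabeled, and remove any resulting self-intersections by shortcutting loops. (One small inaccuracy: since $e'$ is a simple open arc, there is only one sub-arc of $e'$ between $p$ and $q$, not two; this does not affect the argument.)
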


\begin{proof} We show that if edges $e,e'$ intersect at least twice,
the number of crossings can be reduced. We consider crossings $X_1$
and $X_2$ that are consecutive along $e$. There are two cases
to consider, the second one being easy to overlook, and in each
of them we redraw the edges locally as indicated:
\immfig{j-twocross}
This reduces the total number of crossings by at least $2$. We also note
that since the part of $e'$ between $X_1$ and $X_2$ may be intersected
by $e$, this redrawing may introduce self-intersections of $e$---but these
are easily eliminated by shortcutting the resulting loops on~$e$.
\end{proof}

\begin{exercise} Let us say that $X_1,X_2$ is a \emph{simple pair
of crossings} of edges $e$ and $e'$ if $X_1$ and $X_2$ are consecutive
on both $e$ and $e'$. Draw an example of two edges that cross
several times but that have no simple pair of crossings.
\end{exercise}

The lemma just proved shows that it does not matter whether we
define the crossing number as the minimum number of \emph{crossings}
or as the minimum number of \emph{crossing pairs of edges} in a drawing.
Do you believe the previous sentence? If yes, you are in a good company,
since many people got caught, and only Pach and T\'oth \cite{PachToth-cr},
and independently Mohar (at a 1995 AMS Conference on Topological Graph Theory),
noticed that the lemma proves nothing like that, since it is not clear
whether a drawing with minimum number of crossings also has
a minimum number of crossing pairs. Indeed, the number of crossing
pairs may very well increase in a redrawing as in the proof of
the lemma:
\immfig{j-newpair}

Thus, it makes good sense to define the \defi{pair-crossing number}
$\PCR(G)$ as the minimum possible number of pairs of edges that cross
in a  drawing of~$G$. Clearly, $\PCR(G)\le\CR(G)$ for all $G$.

It is generally conjectured that $\PCR(G)=\CR(G)$ for all $G$, but
if true, this is unlikely to be proved by a ``local'' redrawing
argument in the spirit of Lemma~\ref{l:onecross}---at least, many
people tried that and failed. 

A warning example is a result of
Pelsmajer et al.~\cite{pelsmajer-al-odd}, who found a graph $G$
with $\OCR(G)<\CR(G)$. Here $\OCR(G)$ is the \defi{odd crossing number}
of $G$, which is the minimum number, over  drawings of $G$,
 of pairs of edges that cross \emph{an odd number of times}. A good motivation
for studying the odd crossing number is the famous \defi{Hanani--Tutte theorem},
asserting that if a graph has a drawing in which every two
\emph{non-adjacent} edges cross an even number of times, then it is planar
(see \cite{Schaefer-HT}  for a modern treatment).
In particular, this implies that $\OCR(G)=0$, $\PCR(G)=0$, and $\CR(G)=0$
are all equivalent.

One direction of investigating the pcr/cr puzzle is to bound the crossing
number by some function of the pair-crossing number, and to try to get
as small a bound as possible. We begin with a simple result in this
direction.

\begin{proposition}\label{p:simple-crpcr}
If $\PCR(G)=k$, then $\CR(G)\le 2k^2$.
\end{proposition}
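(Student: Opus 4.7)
The plan is to start from a drawing $D_0$ witnessing $\PCR(G)=k$ and transform it, via iterated applications of the single-crossing redrawing from Lemma~\ref{l:onecross}, into a drawing in which no two edges cross more than once and the set of edges involved in any crossing is small. Concretely, let $F\subseteq E(G)$ be the set of edges that participate in at least one crossing in $D_0$. Since the $k$ crossing pairs contribute at most two edges each, $|F|\le 2k$; every edge outside $F$ is uncrossed in $D_0$.

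Next, I would iterate the redrawing procedure from the proof of Lemma~\ref{l:onecross}: as long as there is a pair of edges $e,e'$ crossing at least twice, pick two consecutive crossings $X_1,X_2$ along $e$ and swap the arcs between them, followed by loop-shortcutting to kill any resulting self-intersections. Each such step strictly reduces the total number of crossings (by at least $2$), so the process terminates. The key observation is that this redrawing never creates any crossing involving an edge outside $F$: the local modification only reroutes the two sub-arcs of $e$ and $e'$ between $X_1$ and $X_2$; a third edge $f$ that previously crossed that region still meets the same points, simply reassigned between $e$ and $e'$, while an edge $f$ that previously avoided that region is untouched. Shortcutting loops of $e$ likewise only removes crossings, never creates new ones with outside edges. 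Hence the invariant ``every edge not in $F$ remains uncrossed'' is preserved throughout.

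When the process halts, we obtain a drawing $D^*$ of $G$ in which each pair of edges crosses at most once and in which every crossing is between two edges of $F$. Therefore
\[
\CR(G)\le \mbox{(number of crossings in $D^*$)}\le \binom{|F|}{2}\le \binom{2k}{2}=k(2k-1)\le 2k^2.
\]

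The main obstacle to watch for is the interaction between the single-crossing swap and the other edges: one has to verify carefully that the swap, together with loop-shortcutting, neither activates a previously uncrossed edge nor causes the termination argument to fail (the swap may well \emph{increase} the pair-crossing count between some $e$ and a third edge $f$, but since we only need the total crossing count to decrease, this is fine). Everything else is bookkeeping around Lemma~\ref{l:onecross}.
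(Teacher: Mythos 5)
Your proof is correct and takes essentially the same approach as the paper: fix a drawing witnessing $\PCR(G)=k$, restrict attention to the set $F$ of crossed edges (with $|F|\le 2k$), apply the single-crossing redrawing of Lemma~\ref{l:onecross} only to pairs within $F$, and bound $\CR(G)\le\binom{|F|}{2}\le 2k^2$. The paper simply remarks that the uncrossed edges of $E_0=E\setminus F$ ``do not interfere with this redrawing in any way,'' whereas you spell out in more detail the invariant that the swap plus loop-shortcutting never creates a crossing on a previously uncrossed edge.
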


\begin{proof} We fix a drawing $D$ of $G=(V,E)$
witnessing $\PCR(G)$. Let $F\subseteq E$ be the set of edges that
participate in at least one crossing, and let $E_0=E\setminus F$;
thus, the edges of $E_0$ define a plane subgraph in $D$.
We keep the drawing of these edges and we redraw the edges
of $F$ so that every two of them have at most one crossing,
as in  the proof of Lemma~\ref{l:onecross} (we note that $E_0$
does not interfere with this redrawing in any way).
Since $|F|\le 2k$, the resulting drawing has at most ${2k\choose 2}\le 2k^2$
crossings.
\end{proof}

The current strongest bound is based on the separator theorem 
for string graphs.

\begin{theorem}\label{t:toth}
If $\PCR(G)=k\ge 2$, then $\CR(G)=O(k^{3/2}(\log k)^2)$.
\end{theorem}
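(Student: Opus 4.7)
The plan is induction on $k$, with $T(k)$ denoting the maximum of $\CR(G)$ over graphs $G$ with $\PCR(G)\le k$. The target recurrence is
\[
T(k) \;\le\; T(k_A)+T(k_B)+O\bigl(k^{3/2}\log k\bigr),\qquad k_A+k_B\le k,\ \ k_A,k_B\le \tfrac{2}{3}k,
\]
which resolves to $T(k)=O(k^{3/2}(\log k)^2)$.

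First I would fix a drawing $D$ of $G$ realizing $\PCR(G)=k$, let $F\subseteq E(G)$ be the $\le 2k$ edges involved in at least one crossing of $D$, and set $E_0=E(G)\setminus F$. Form the auxiliary graph $H$ with vertex set $F$ and an $H$-edge for each crossing pair of $D$; via the polygonal curves drawing the edges of $F$ in $D$, $H$ is a string graph with $|V(H)|\le 2k$ and $|E(H)|\le k$. Applying Theorem~\ref{t:strsep} to $H$ in a weighted form --- each vertex of $H$ weighted by its $H$-degree, so the total weight is $\le 2k$ --- produces a separator $S$ of size $|S|=O(\sqrt{k}\log k)$ and a partition $F\setminus S=A\sqcup B$ such that no crossing of $D$ joins an edge of $A$ to one of $B$, and both $A$, $B$ have weight $\le \tfrac{4k}{3}$. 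Setting $k_A,k_B$ to be the numbers of crossings of $D$ entirely within $A,B$, this gives $k_A,k_B\le \tfrac{2}{3}k$ and $k_A+k_B\le k$. The edge-subgraphs $G_A=(V(G),E_0\cup A)$ and $G_B=(V(G),E_0\cup B)$ inherit from $D$ subdrawings with pair-crossing number $\le k_A,k_B$ respectively (edges of $E_0$ cross nothing in $D$), so by the inductive hypothesis they admit drawings with at most $T(k_A),T(k_B)$ crossings.

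The main obstacle --- the technical heart of the argument --- is to combine these two recursive drawings into a single drawing of $G$ that also carries the edges of $S$, paying only the claimed $O(k^{3/2}\log k)$ overhead. The topological ingredient is that because no $A$-edge crosses a $B$-edge or an $E_0$-edge in $D$, a homeomorphism of the plane sequesters $A$ and $B$ into disjoint regions meeting only at shared vertices, so the two recursive drawings can be placed side-by-side without introducing any new $A$--$B$ crossings. Re-inserting each $s\in S$ is then carried out by routing it along a perturbation of its original curve in $D$ and invoking the single-crossing lemma (Lemma~\ref{l:onecross}) to ensure that $s$ crosses every other edge at most once; since $s$ need only interact with the $O(k)$ edges of $F$, it contributes $O(k)$ crossings, and summing over all $|S|=O(\sqrt{k}\log k)$ separator edges gives the desired per-level overhead. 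Making the combination step fully rigorous --- in particular, aligning the recursive drawings of $G_A$ and $G_B$ so their restrictions to $E_0$ agree, and controlling the interaction between the re-inserted $S$-edges and the recursive drawings --- is the delicate part.
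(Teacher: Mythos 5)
Your high-level strategy (separator on the string graph of crossing-involved edges, recurse, recombine) matches the paper's, but two of the technical moves you describe do not go through as stated, and the two differences are precisely where the paper's proof is doing nontrivial work.

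First, the recombination step. You write that after overlaying the recursive drawings you ``re-insert each $s\in S$ \ldots invoking the single-crossing lemma (Lemma~\ref{l:onecross}) to ensure that $s$ crosses every other edge at most once.'' Lemma~\ref{l:onecross} is not a redrawing tool: it asserts a property of crossing-minimizing drawings, and the local untangling operation in its proof offers no control over what happens to the crossings among the non-$S$ edges you are trying to protect. The paper's device is the red--blue single-crossing lemma (Lemma~\ref{l:rbc}): color the recursive edges blue and the separator edges red, and the lemma produces a drawing in which (a) every pair of edges crosses at most once, bounding red-involved crossings by $|F_0|\cdot|F|$, and (b) the number of blue--blue crossings does not increase, preserving the inductive bound. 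Without something like this lemma, the ``delicate part'' you flag at the end is a genuine gap, not merely a detail.

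Second, you induct on $k$ and need $k_A,k_B\le\frac23 k$, which you get from a \emph{weighted} version of Theorem~\ref{t:strsep} (vertices weighted by degree). That weighted form is not proved in the paper, and while the section on flows remarks that weights could be added, the author deliberately avoids needing them. The paper instead inducts on $\ell$, the number of edges involved in a crossing, and uses the unweighted separator to guarantee $|F_1|,|F_2|\le\frac23\ell$; the bound proved inductively is $Ak^{3/2}(\log\ell)^2$, which yields the theorem since $\ell\le 2k$. Your route could be made to work if you supplied the weighted separator theorem, but the $\ell$-induction is the cleaner workaround.

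Finally, two smaller mismatches: (i) the paper's inductive statement explicitly requires the recursive drawings to lie in an arbitrarily small neighborhood of the edges of $D$, which is what makes the overlay of $D'_1$ and $D'_2$ introduce no $F_1$--$F_2$ crossings --- your ``sequester $A$ and $B$ into disjoint regions'' argument tries to achieve the same thing but is harder to make precise since $A$- and $B$-edges share vertices; (ii) the paper applies the induction only to $(V,F_1)$ and $(V,F_2)$ and re-attaches $E_0$ once at the very end (it never crossed anything in $D$ and the small-neighborhood invariant keeps it that way), which dissolves the $E_0$-alignment problem you correctly identify in your last sentence.
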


The following proof is due to T\'oth \cite{toth430better};
he states a worse bound, but the bound above follows immediately
from his proof by plugging in a better separator theorem.

We begin the proof with a variant of the single-crossing lemma
(Lemma~\ref{l:onecross}).

\begin{lemma}[Red-blue single-crossing lemma]\label{l:rbc}
Let $G$ be a graph in which each edge is either red or blue, and
let $D$ be a drawing of $G$. Then there is a drawing $D'$ of $G$
such that the following hold:
\begin{enumerate}
\itemsep=0pt\topsep=0pt
\item[\rm(i)] Each edge in $D'$ is drawn in an arbitrarily small neighborhood
of the edges of~$D$.
\item[\rm(ii)] Every two edges intersect at most once in~$D'$.
\item[\rm(iii)] The number of blue-blue crossings in $D'$ is 
no larger than in~$D$.
\end{enumerate}
\end{lemma}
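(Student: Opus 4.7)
Plan. I would prove the lemma by a minimization argument adapted to the color structure. Consider the family $\mathcal{F}$ of all standard drawings of $G$ whose edges lie inside a prescribed arbitrarily small tubular neighborhood $U$ of the union of edges of $D$; condition (i) is then built into the definition of $\mathcal{F}$. Let $D^*\in\mathcal{F}$ be a lexicographic minimizer of
\[
\Phi(D') \;=\; \bigl(\text{\# blue--blue crossings in }D',\ \text{total \# of crossings in }D'\bigr).
\]
Since $D\in\mathcal{F}$ itself, condition (iii) is automatic. The nontrivial content of the lemma is to show that $D^*$ also satisfies (ii).

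Suppose for contradiction that two edges $e,e'$ of $D^*$ cross at least twice. Then there is a \emph{bigon}, a topological disk bounded by an arc $a_1\subseteq e$ and an arc $a_2\subseteq e'$ meeting at two crossing corners $X_1,X_2$. Among all bigons between any pair of edges in $D^*$, let $B$ be one that is minimal with respect to inclusion; after possibly perturbing $D^*$ slightly within $U$ into generic position, assume no vertex of $G$ lies in the interior of $B$.

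The key topological observation is this: for every edge $f\neq e,e'$, each arc of $f$ lying inside $B$ has one endpoint on $a_1$ and the other on $a_2$. Indeed, if both endpoints of some such arc lay on the same side, say $a_1$, then this $f$-arc together with the sub-arc of $a_1$ between them would bound a bigon of the pair $(f,e)$ strictly inside $B$, contradicting the minimality of $B$. Consequently $|f\cap a_1|=|f\cap a_2|$ for every such $f$. I would then perform the standard \emph{swap} at $B$: reroute the portion of $e$ between $X_1$ and $X_2$ to follow $a_2$, and symmetrically reroute the portion of $e'$ to follow $a_1$, with small local smoothings at the two corners. The resulting drawing $D^{**}$ still lies in $U$, because both $a_1$ and $a_2$ were already in $U$. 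By the key observation, the individual counts $|e\cap f|$ and $|e'\cap f|$ are preserved for every third edge $f$, so the only pairwise count that changes is $|e\cap e'|$, which drops by exactly $2$.

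It remains to check that $\Phi$ strictly decreases. If both $e,e'$ are blue, the two eliminated crossings are blue--blue, so the first coordinate of $\Phi$ strictly drops. If at least one of $e,e'$ is red, the first coordinate is unchanged but the second coordinate (total crossings) drops by $2$. Either way $\Phi(D^{**})<\Phi(D^*)$, contradicting the minimality of $D^*$; hence $D^*$ must already satisfy (ii). The main obstacle is the topological claim about minimal bigons, in particular handling the possibility of vertices of $G$ inside $B$---this is where the generic-position assumption enters, and the argument must be phrased carefully (an alternative is to iterate the ``minimal bigon'' construction into smaller sub-regions generated by edges incident to an interior vertex, but the genericity route is by far the cleanest).
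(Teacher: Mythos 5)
The core of your approach is a bigon-removal argument: take a lexicographic minimizer $D^*$ of $(\#\text{blue-blue}, \#\text{total})$ over drawings inside a tube $U$, and show a minimal bigon $B$ can be swapped away. This is genuinely different from the paper, which never looks for a minimal bigon; instead, for any two consecutive crossings $X_1,X_2$ of $e$ along $e'$, it routes one of the two arcs $\hat e, \hat e'$ in a thin tube \emph{along the other} (an asymmetric move, not a swap), choosing the direction by lexicographic comparison of the color-profile of crossings on $\hat e$ versus $\hat e'$, and then shows that the triple $(x_{BB},x_{RB},x_{RR})$ strictly lex-decreases.

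The difference matters, because your argument has a genuine gap at the point you yourself flag. A minimal bigon $B$ can contain vertices of $G$, and no perturbation within $U$ removes them: vertices are fixed points, only the edges are redrawn. If a vertex $v$ lies in the interior of $B$, an edge $f$ incident to $v$ has an arc in $B$ with one endpoint at $v$ rather than on $\partial B$, so the claim that every arc of $f$ in $B$ joins $a_1$ to $a_2$ fails, and consequently $|f\cap a_1|\ne|f\cap a_2|$ is possible. Your swap then does \emph{not} preserve the individual counts $|e\cap f|$ and $|e'\cap f|$---it exchanges the contributions coming from inside $B$. Concretely, if $e$ is blue, $e'$ is red, $f$ is blue, and $f$ meets $a_1$ fewer times than $a_2$, the swap strictly increases $x_{BB}$ while the two eliminated $e$-$e'$ crossings are red-blue and give nothing back. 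So $\Phi(D^{**})<\Phi(D^*)$ is false in this case and the contradiction evaporates. The ``iterate into sub-regions'' alternative you mention parenthetically is not developed and would need real work. The paper's asymmetric routing sidesteps the entire issue: the fixed edge keeps all its third-edge crossings unchanged, the rerouted edge inherits the crossing profile of the arc it now parallels, and the choice of which arc to route along is made precisely so that the lex potential drops; no hypothesis about what lies between the two arcs (vertices or otherwise) is ever needed.
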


\begin{proof} While edges $e,e'$ crossing at least twice exist,
we repeat redrawing operations similar to those in Lemma~\ref{l:onecross}.
However, while in that lemma we swapped portions of $e$ and $e'$,
here we keep $e$ fixed and route $e'$ along it,
\immfig{j-twocross1}
or the other way round. We may again introduce 
self-intersections of $e$ or $e'$, but we remove
them by shortcutting loops.

To decide which way of redrawing to use, we 
let $\hat e$ be the portion of $e$ between $X_1$ and $X_2$
(excluding $X_1$ and $X_2$), and similarly for $\hat e'$. 
Let $b$ and $b'$ be the number of crossings of blue edges
with $\hat e$ and $\hat e'$, respectively, and similarly for $r,r'$.
If the pair $(b,r)$ is lexicographically smaller than $(b',r')$,
we route $\hat e'$
along $\hat e$, and otherwise, $\hat e$ along~$\hat e'$.

It is clear that \emph{if} this redrawing procedure terminates, then it
yields a drawing $D'$ satisfying (i) and (ii). To show that it
terminates, and that (iii) holds, it suffices to
check that each redrawing strictly
lexicographically decreases the vector $(x_{BB},x_{RB},x_{RR})$
for the current drawing, where $x_{BB}$ is the total number of
blue-blue crossings, and similarly for $x_{RB}$ and~$x_{RR}$.

To check this, we distinguish three cases. If both $e,e'$ are
blue, then the redrawing decreases $x_{BB}$. If both $e,e'$
are red, then $x_{BB}$ stays the same and either $x_{RB}$
decreases, or it stays the same and $x_{RR}$ decreases.
Finally, if $e$ is blue and $e'$ is red, then either $x_{BB}$
decreases, or it stays the same and $x_{RB}$ decreases.
\end{proof}

In order to prove Theorem~\ref{t:toth}, we will proceed by induction.
The inductive hypothesis is the following strengthening of the theorem.

\begin{claim}\label{cl:redr} 
Let $G=(V,E)$ be a graph, and let $D$
be a drawing of $G$ with $k$ crossing pairs of edges and
with $\ell\ge 2$ edges that have at least one crossing
(thus, $\ell\le 2k$ and $k\le{\ell\choose 2}$).
Then there is a drawing $D'$ of $G$ such that every edge is drawn
in an arbitrarily small neighborhood of the edges in $D$,
and $D'$ has at most $Ak^{3/2}(\log\ell)^2$ crossings,
where $A$ is a suitable constant.
\end{claim}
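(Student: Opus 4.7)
The plan is to prove Claim~\ref{cl:redr} by induction on $\ell$, with the string-graph separator theorem (Theorem~\ref{t:strsep}) doing the heavy lifting at each step. The set $F$ of the $\ell$ crossing edges is naturally the vertex set of a string graph $H$: represent each edge of $F$ by the polygonal curve drawing it in $D$, and take crossing pairs in $D$ as the edges of $H$. Then $H$ has $\ell$ vertices and $k$ edges. Applying Theorem~\ref{t:strsep} to $H$ produces a separator $S\subseteq F$ with $|S|=O(\sqrt{k}\log k)$ such that $F\setminus S=A'\sqcup B'$, with $|A'|,|B'|\le\tfrac{2}{3}\ell$ and \emph{no edge of $A'$ crosses any edge of $B'$ in $D$}.

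Next, I would recurse on the two one-sided sub-drawings: let $D_A$ be $D$ with the edges of $B'\cup S$ erased, and define $D_B$ symmetrically. These have $k_A$ and $k_B$ crossing pairs with $k_A+k_B\le k$, and their numbers of crossing edges are at most $|A'|,|B'|\le\tfrac{2}{3}\ell$. By the inductive hypothesis (or by the single-crossing lemma in the base case $\ell$ below a fixed constant), they can be redrawn, each edge staying in an arbitrarily small neighborhood of its original curve, using at most $Ak_A^{3/2}(\log|A'|)^2$ and $Ak_B^{3/2}(\log|B'|)^2$ crossings respectively. Choosing the neighborhoods small enough ensures that the $A'$-curves from the redrawn $D_A$ still do not meet the $B'$-curves from the redrawn $D_B$, and neither set meets the non-crossing edges of $G$.

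Combining these with the original drawings of the edges in $S$ produces a drawing of $G$ in which the only ``excess'' crossings involve~$S$. To tame those, I would apply the red-blue single-crossing lemma (Lemma~\ref{l:rbc}) with edges of $S$ colored red and the rest colored blue. The conclusion guarantees that afterwards every pair of edges crosses at most once, while blue-blue crossings do not increase. Hence the final crossing count is at most the blue-blue crossings already present, namely $Ak_A^{3/2}(\log|A'|)^2+Ak_B^{3/2}(\log|B'|)^2$, plus at most $|S|\cdot|A'\cup B'|$ red-blue pairs and $\binom{|S|}{2}$ red-red pairs, the latter two contributions being $O(k^{3/2}\log k)+O(k\log^2 k)$ since $|A'\cup B'|\le\ell\le 2k$.

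To close the induction I would use $k_A^{3/2}+k_B^{3/2}\le k^{3/2}$ (from convexity of $x^{3/2}$ together with $k_A+k_B\le k$), the bound $\log|A'|,\log|B'|\le\log\ell-\log(3/2)$, and the estimate $\log\ell=\Omega(\log k)$ which follows from $k\le\binom{\ell}{2}$. Expanding $(\log\ell-\log(3/2))^2$ creates a $-2\log(3/2)\,\log\ell$ term of order $\log k$ inside the leading $Ak^{3/2}$, and this is what absorbs the $O(k^{3/2}\log k)$ separator overhead once the constant $A$ is taken large enough. The main obstacle I anticipate is the combination step in the third paragraph: one has to make the three pieces (the redrawn $D_A$, the redrawn $D_B$, and the retained $S$-edges) genuinely glue together as a single drawing in which the $A'$-versus-$B'$ disjointness survives, and then check carefully that invoking Lemma~\ref{l:rbc} on this combined object does not reintroduce blue-blue crossings beyond what the induction hypothesis has already accounted for.
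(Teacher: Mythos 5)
Your proposal follows essentially the same approach as the paper: treat the $\ell$ crossing edges as a string graph, apply Theorem~\ref{t:strsep} to split them into two non-crossing sides plus a small separator, recurse on the two sides (relying on the ``arbitrarily small neighborhood'' clause of the inductive hypothesis so that the two recursively redrawn pieces still do not cross each other when glued back), and invoke Lemma~\ref{l:rbc} with the separator edges colored red to keep the blue-blue crossings from growing while trimming all other crossing multiplicities to one. The concern you flag about the gluing step and about Lemma~\ref{l:rbc} not reintroducing blue-blue crossings is resolved exactly as you anticipate, and your algebraic closing (convexity of $x^{3/2}$, the $\log$-savings from the factor $\tfrac23$, and $\log\ell=\Omega(\log k)$) matches the paper's computation, which for neatness bounds $(\log\ell_i)^2$ by $(\log\ell)(\log\ell-c_0)$ rather than $(\log\ell-c_0)^2$; you would only need to add the small explicit base cases ($k=1$, $\ell_i\le 1$) that the paper treats separately.
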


\begin{proof}
We proceed by induction on $\ell$ (the proof will also establish
the base case $\ell=2$ directly).

As in the proof of Proposition~\ref{p:simple-crpcr}, we first partition
the edge set $E$ into $F$, the edges with crossings, and $E_0=E\setminus F$.
Thus, $|F|=\ell$.

Let us consider the edges of $F$ in the drawing $D$
as strings (where we cut off tiny pieces near the vertices, so that
the strings meet only if the corresponding edges cross). This defines
a string graph with $\ell$ vertices and $k$ edges. 

By Theorem~\ref{t:strsep}, this string graph has a separator of
size at most $C\sqrt k\log k$, with a suitable constant~$C$.
This defines a partition of $F$ into disjoint subsets $F_0,F_1,F_1$;
 we have $\ell_0:=|F_0|\le C\sqrt k\log k$,
$|F_1|,|F_2|\le \frac23 \ell$, and no edge of $F_1$
crosses any edge of $F_2$.  
Let $k_i$ be the number of crossing pairs of edges of $F_i$ in $D$, $i=1,2$;
we have $k_1+k_2\le k$. Let $\ell_i$ be the number of edges of $F_i$
that cross some edge of $F_i$.

Actually, Theorem~\ref{t:strsep} can be applied only if $k\ge 2$,
while in our case, for $\ell=2$ it may happen that $k=1$. But if
$k=1$, we simply put $F_0=F$ and $F_1=F_2=\emptyset$, and proceed
with the subsequent argument.

Next, we apply the inductive hypothesis to the graphs 
$G_1:=(V,F_1)$ and $G_2:=(V,F_2)$ (drawn as in $D$).  This yields
drawings $D'_1$ and $D'_2$ as in the claim. (If $F_1$ has no
crossings then, strictly speaking, the inductive hypothesis cannot be applied,
but then $D'_1$ can be taken as the plane drawing
of $G_1$ inherited from $D$, and similarly for $F_2$.)

For $\ell_i\ge 2$, we can bound the number of crossings in $D'_i$ by 
$Ak_i^{3/2}(\log\ell_i)^2$ according to the inductive hypothesis;
for $\ell_i=0$ there are no crossings (and $\ell_i=1$ is impossible).
We have $\ell_i\le|F_i|\le\frac 23\ell$, and
hence $\log\ell_i\le\log \ell-c_0$, where $c_0=\log\frac 32>0$.
For the subsequent computation, it will be convenient to bound
$(\log\ell_i)^2$ from above in a slightly strange-looking way:
 by $(\log \ell)(\log\ell-c_0)$. The resulting bound
$Ak_i^{3/2}(\log \ell)(\log\ell-c_0)$ gives $0$ for $k_i=0$ and
so it is also valid in the case $k_i=\ell_i=0$.

We overlay $D'_1$ and $D'_2$ and add the edges
of $F_0$ drawn as in $D$; this gives
a drawing $\tilde D$ of the graph $(V,F)$.
Let us color the edges of $F_1\cup F_2$ blue and those of $F_0$
red. By the above, and using $k_1^{3/2}+k_2^{3/2}\le k^{3/2}$,
 the total number of blue-blue crossings in $\tilde D$ can be bounded
by
\[
A(k_1^{3/2}+k_2^{3/2})(\log\ell)(\log\ell-c_0)
\le Ak^{3/2}(\log\ell)^2- Ac_0k^{3/2}\log\ell.
\]
The first term of the last expression is the desired bound for the number of
all crossings, and thus the second term
is the ``breathing room''---we need to  bound
the number of red-blue and red-red crossings by $Ac_0k^{3/2}\log\ell$.

We cannot control the number of red-blue and red-red crossings
in $\tilde D$, but we apply Lemma~\ref{l:rbc} to the graph $(V,F)$
with the drawing $\tilde D$. This provides a new drawing of $(V,F)$,
to which we add the edges of $E_0$ from the original drawing $D$,
and this  yields the final drawing $D'$. The number of blue-blue crossings
in $D'$ is no larger than in $\tilde D$, and the number
of red-blue and red-red crossings is at most
$|F_0|\cdot |F|=\ell_0\ell$.
Using $\ell\le 2k$ and $k\le {\ell\choose 2}\le \ell^2$,
we further bound this by $(C\sqrt k\log k) 2k\le
4Ck^{3/2}\log \ell\le Ac_0k^{3/2}\log\ell$, provided that
$A$ was chosen sufficiently large.

This concludes the inductive proof of the claim and 
thus yields Theorem~\ref{t:toth}.
\end{proof}

\section{Multicommodity flows, congestion, and cuts}\label{s:approxdual}

We start working towards a proof of the separator theorem
for string graphs (Theorem~\ref{t:strsep}). The overall scheme 
of the proof is given at the end of this section,
but most of the actual work remains for
later sections.

\heading{$s$-$t$ flows.} As a motivation for the subsequent
developments, we briefly recall the duality between flows and cuts in graphs.
If $G=(V,E)$ is a graph with two distinguished vertices $s$ and $t$,
in which every edge has unit capacity,
then the maximum amount of flow from $s$ to $t$ equals 
the minimum number of edges we have to remove in order to
destroy all $s$-$t$ paths. There is also a more general
weighted version, in which the capacity of each edge $e$
is a given real number $w_e\ge 0$.


\heading{Multicommodity flows. }
Instead of flows between just two vertices, we will use 
\emph{multicommodity flows}; namely, we want  
a unit flow between every pair $\{u,v\}$ of vertices of the
considered graph.

Let us remark that instead of requiring unit flow for every pair,
we can consider an arbitrary \emph{demand function} $D\:
{V\choose 2}\to [0,\infty)$, specifying some demand $D(u,v)$
on the flow between $u$ and $v$ for every pair $\{u,v\}$.
All of the considerations below can be done in this more general
setting; we can also put weights on edges and vertices.
For simplicity, we stick to the unweighted case, which is
sufficient for us; conceptually, the weighted case mostly
brings nothing new.

For our purposes, it is convenient to formalize a multicommodity
flow as an
assignment of nonnegative numbers to paths in the considered
graph. Thus, we define $\PP$ to be the set of all paths
(of nonzero length) in $G$, and a \defi{multicommodity
flow} is a mapping $\varphi\:\PP\to[0,\infty)$. 
Since we will
talk almost exclusively about multicommodity flows, we will
sometimes say just ``flow'' instead of ``multicommodity flow''.

The amount of flow between two vertices $u,v$ is
$\sum_{P\in\PP_{uv}}\varphi(P)$, where $\PP_{uv}\subseteq \PP$
is the set of all paths with end-vertices $u$ and~$v$.
If we think of the vertices as cities and the edges as roads,
then $\varphi(P)$, $P\in\PP_{uv}$  may be the number of cars per hour driving
from $u$ to $v$ or from $v$ to $u$ along the route~$P$.

\heading{Edge congestion. }
The requirement of unit flow between every pair of vertices
is expressed as $\sum_{P\in\PP_{uv}}\varphi(P)\ge 1$, 
$\{u,v\}\in {V\choose 2}$. We define the \defi{edge congestion}
under $\varphi$ as 
\[
\econg(\varphi)=\max_{e\in E} \sum_{P\in\PP:e\in P}\varphi(P),
\]
and the edge congestion of $G$ as $\econg(G)=\min_\varphi
\econg(\varphi)$, where the minimum is over all 
flows with unit flow between every two vertices.\footnote{A compactness
argument, which we omit, shows that the minimum is actually
attained.} If $G$ is disconnected, then there are no flows $\varphi$
as above, and we have $\econg(G)=\infty$.

Let us consider an \defi{edge cut} in $G$, which for us is a partition
$(A,V\setminus A)$ of $V$ into two \emph{nonempty} subsets.
By $E(A,V\setminus A)$ we denote the set of all edges of $G$ connecting
$A$ to $V\setminus A$. If there is a unit flow between every two vertices
of $G$, then $|A|\cdot|V\setminus A|$ units of flow have to pass through
the edges of $E(A,V\setminus A)$, and hence
\[
\econg(G)\ge \frac{|A|\cdot|V\setminus A|}{|E(A,V\setminus A)|}.
\]
If we define the \emph{sparsity} of the edge cut $(A,V\setminus A)$
as 
\[
\espars(A,V\setminus A)=\frac{|E(A,V\setminus A)|}{|A|\cdot|V\setminus A|},
\]
and the \defi{edge sparsity}\footnote{What we call edge sparsity
is often called just \emph{sparsity}. This quantity is also
closely related to the \emph{Cheeger constant}, or
\emph{edge expansion}, of $G$, which is defined
as $\min_{A\subseteq V: 1\le |A|\le |V|/2} (|E(A,V\setminus A)|/|A|)$.}
$\espars(G):=\min_A \espars(A,V\setminus A)$,
we can write the conclusion of the previous
consideration compactly as $\espars(G)\ge 1/\econg(G)$.

\heading{Approximate duality. } Unlike
in the case of $s$-$t$ flows, it turns out that
the last inequality can be strict.


\begin{exercise} 
Let $G$ be an $n$-vertex \defi{constant-degree expander},
which means that, for some constants $\Delta$ and $\beta>0$,
all degrees in $G$ are at most $\Delta$ and $\espars(G)\ge \frac\beta n$.
The existence of such graphs, with some $\Delta,\beta$ fixed
and $n$ arbitrarily large, is well known;
see, e.g., \cite{expander-survey}.
Prove that $\econg(G)>1/\espars(G)$ (assuming that $n$ is sufficiently
large in terms of $\Delta$ and $\beta$), and actually, that
$\econg(G)=\Omega(\frac{\log n}{\espars(G)})$. Hint: show that, say,
half of the vertex pairs have distance $\Omega(\log n)$.
\end{exercise}

However, an important result, discovered by Leighton and Rao
\cite{LeightonRao}, asserts that the gap between the two quantities
cannot be very large; this is an instance of \emph{approximate duality}
between multicommodity flows and cuts.

\begin{theorem}[Approximate duality, edge version]
\label{t:leightonrao}
For every  $n$-vertex graph $G$ we have
 \[\espars(G)=O\Bigl(\frac {\log n}{\econg(G)}\Bigr).\]
\end{theorem}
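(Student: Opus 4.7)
The plan is to combine linear programming duality with a low-distortion metric embedding into $\ell_1$. First, I write the reciprocal $1/\econg(G)$ as a linear program: maximize $\lambda$ over nonnegative flows $\varphi\colon\PP\to[0,\infty)$ and $\lambda\ge 0$, subject to $\sum_{P\in\PP_{uv}}\varphi(P)\ge \lambda$ for every pair $\{u,v\}$ and $\sum_{P\ni e}\varphi(P)\le 1$ for every edge $e$. Taking the LP dual yields a minimization over edge lengths $d_e\ge 0$ in which the dual pair-variables $y_{uv}$ can be taken, at optimum, proportional to the shortest-path distances $\rho_d(u,v)$ in $G$ under edge lengths $d$. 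After rescaling one obtains the equivalent expression
\[
\econg(G)=\max_{d\ge 0,\ d\ne 0}\frac{\sum_{\{u,v\}}\rho_d(u,v)}{\sum_{e\in E}d_e}.
\]
A short argument then shows that the maximum is attained by a shortest-path metric, so it may be rewritten as $\max_\rho \sum_{\{u,v\}}\rho(u,v)/\sum_{e=\{x,y\}\in E}\rho(x,y)$ over all metrics $\rho$ on $V$: one may shrink each $d_e$ to $\rho_d(x,y)$ without changing any $\rho_d$-distance, which only decreases the denominator.

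Second, I invoke Bourgain's embedding theorem: every $n$-point metric $(V,\rho)$ admits a map $f\colon V\to \ell_1^k$ with $\|f(u)-f(v)\|_1\le \rho(u,v)$ and $\|f(u)-f(v)\|_1\ge \rho(u,v)/(C\log n)$ for all $u,v$, where $C$ is an absolute constant. Since every $\ell_1$-semimetric on $V$ is a nonnegative combination $\sum_A \alpha_A \chi_A$ of cut semimetrics (where $\chi_A(u,v)=1$ iff exactly one of $u,v$ lies in $A$), the ratio
\[
\frac{\sum_{\{u,v\}}\|f(u)-f(v)\|_1}{\sum_{e=\{x,y\}\in E}\|f(x)-f(y)\|_1}=\frac{\sum_A\alpha_A\sum_{uv}\chi_A(u,v)}{\sum_A\alpha_A\sum_{e=\{x,y\}}\chi_A(x,y)}
\]
is a weighted average of the individual cut-ratios $\sum_{uv}\chi_A(u,v)/\sum_{e=\{x,y\}}\chi_A(x,y)=|A|\cdot|V\setminus A|/|E(A,V\setminus A)|=1/\espars(A,V\setminus A)$. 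Consequently some single cut $(A,V\setminus A)$ attains at least this overall ratio.

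Third, choosing $\rho$ to attain $\econg(G)$ in the dual formulation, the two Bourgain estimates give
\[
\frac{\sum_{uv}\|f(u)-f(v)\|_1}{\sum_{e=\{x,y\}}\|f(x)-f(y)\|_1}\ge \frac{1}{C\log n}\cdot \frac{\sum_{uv}\rho(u,v)}{\sum_{e=\{x,y\}}\rho(x,y)}=\frac{\econg(G)}{C\log n},
\]
so combining with the previous paragraph produces a cut $(A,V\setminus A)$ with $1/\espars(A,V\setminus A)=\Omega(\econg(G)/\log n)$, i.e.\ $\espars(G)=O(\log n/\econg(G))$, as required. The main obstacle is Bourgain's embedding theorem, whose standard proof (sampling random subsets at dyadic scales and using them as Fr\'echet coordinates) is a delicate self-contained argument; the LP duality, the cut-cone decomposition of $\ell_1$, and the averaging step are essentially mechanical.
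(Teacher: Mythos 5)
Your argument is correct and follows the same overall Linial--London--Rabinovich strategy as the paper: express $1/\econg(G)$ via LP duality as a minimum of a distance ratio over shortest-path metrics, then use a metric-embedding theorem to relate this to sparse cuts. The routes diverge in the precise form of the embedding theorem and in how the cut is extracted at the end. The paper invokes a one-dimensional version of Bourgain's theorem (Theorem~\ref{t:bou-a}): a single map $f\colon V\to\R$ whose line metric is pointwise dominated by $d$ and decreases the \emph{average} distance by only an $O(\log n)$ factor; combined with the observation that $\espars(G)$ equals the minimum of the same ratio over line metrics (a short line-metric-to-cut-metric reduction, left as an exercise there), this yields the theorem immediately. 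You instead use the standard $\ell_1^k$-embedding form of Bourgain with two-sided per-pair distortion, then the cut-cone decomposition of $\ell_1$ semimetrics plus a mediant/averaging argument to extract a single good cut. Both are correct and standard. The paper's route is somewhat leaner: the ``below $d$ and average-preserving'' one-dimensional guarantee is exactly what the random Fr\'echet-coordinate construction delivers in one shot (see Section~\ref{s:bourg}), so one avoids both the boosting needed for per-pair distortion and the cut-cone decomposition; your route leans on the better-known textbook Bourgain as a black box, with a slightly heavier final step. One small point worth making explicit in your averaging argument: a cut $A$ with $E(A,V\setminus A)=\emptyset$ and $\emptyset\neq A\neq V$ can carry positive weight only when $G$ is disconnected, in which case $\espars(G)=0$ and $\econg(G)=\infty$ and there is nothing to prove.
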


Although we won't really need this particular theorem,
the proof can serve as an introduction to things
we will actually use, and
we present it in Section~\ref{s:edual}.

\begin{exercise}[Edge sparsity and balanced edge cut]\label{ex:bw}
Let $\beta>0$ and 
let $G$ be a graph on $n$ vertices such that $\espars(H)\le\beta$ for every
induced subgraph $H$ of $G$ on at least $\frac23n$ vertices. Show that
$G$ has a balanced edge cut $(A,V\setminus A)$ with $|E(A,V\setminus A)|\le
\beta n^2$, where balanced means that~$\frac13 n\le |A|\le \frac23 n$.
\end{exercise}

\heading{Vertex notions. } For the proof of the separator theorem for string
graphs, we will need vertex analogs of the ``edge'' notions and results
just discussed.

For a flow $\varphi$ in $G$ we introduce the
\defi{vertex congestion} 
\[
\vcong(\varphi):= \max_{v\in V} \vcong(v),\mbox{ where }
\vcong(v):=\sum_{P\in\PP: v\in P}\grayhalf \varphi(P),
\]
and the vertex congestion of $G$ is $\vcong(G):=\min_\varphi \vcong(\varphi)$,
where the minimum is over all $\varphi$ with unit flow between
every two vertices. The $\grayhalf$ in the above formula should
be interpreted as 1 if $v$ is an inner vertex of the
path $P$, and as $\frac 12$ if $v$ is one of the end-vertices of~$P$.
We thus think of the flow along $P$ as incurring congestion
$\frac12\varphi(P)$ when entering a vertex and congestion
$\frac12\varphi(P)$ when leaving it. (This convention
is a bit of a nuisance in the definition of vertex congestion, but later on, it
will pay off when we pass to a ``dual'' notion.)

By a \defi{vertex cut} in $G$ we mean a partition $(A,B,S)$ of $V$
into three disjoint subsets such that $A\ne\emptyset\ne B$
and there are no edges between 
$A$ and $B$ (this is like in the definition of a separator, except that
we do not require the sizes of $A$ and $B$ to be roughly the same).

If $\varphi$ sends unit flow between every pair of vertices,
then the flows between $A$ and $B$ contribute a total
flow of $|A|\cdot|B|$ through $S$, and moreover, from
each vertex of $S$ we have a flow of $n-1$ to the remaining
vertices. Thus the total congestion of the vertices in $S$
is at least $|A|\cdot|B|+\frac 12 |S|(n-1)$. Losing a constant
factor (and using $n\ge 2$), we bound this somewhat unwieldy
expression from below by $\frac14 |A|\cdot|B|+\frac 14|S|n
=\frac14 |A\cup S|\cdot|B\cup S|$. 

This suggests to define the \emph{sparsity} of a 
vertex cut $(A,B,S)$ as
\[
\vspars(A,B,S):= 
\frac{|S|}{|A\cup S|\cdot|B\cup S|},
\]
and the \defi{vertex sparsity} of $G$
is $\vspars(G):=\min_{(A,B,S)} \vspars(A,B,S)$, where the minimum
is over all vertex cuts.

By the considerations above, we have $\vcong(G)\ge 1/(4\vspars(G))$.
We will need the following analog of Theorem~\ref{t:leightonrao}:

\begin{theorem}[Approximate duality, vertex version]\label{t:vleightonrao}
For every connected $n$-vertex graph $G$ we have
 \[\vspars(G)=O\Bigl(\frac {\log n}{\vcong(G)}\Bigr).\] 
\end{theorem}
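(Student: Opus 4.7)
The plan is to mimic Theorem~\ref{t:leightonrao} (the edge version), with vertex weights replacing edge weights throughout.

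\emph{Step 1 (LP duality).} Formulate $\vcong(G)$ as a linear program: minimize $C$ over nonnegative flows $\varphi\colon\PP\to[0,\infty)$ satisfying $\sum_{P\in\PP_{uv}}\varphi(P)\ge 1$ for every pair $\{u,v\}$ and $\sum_{P\ni v}w_v(P)\varphi(P)\le C$ at every vertex $v$, where $w_v(P)\in\{\tfrac12,1\}$ is the endpoint/interior weight from the definition of $\vcong$. The LP dual has nonnegative vertex weights $\mu_v$ (one per congestion constraint) and pair weights $\delta_{uv}$ (one per demand constraint); eliminating the latter via the path constraints and invoking strong LP duality yields
\[
\vcong(G)=\max_{\mu\ge 0,\ \sum_v\mu_v\le 1}\ \sum_{\{u,v\}\in\binom{V}{2}}d_\mu(u,v),
\]
where $d_\mu(u,v)$ is the vertex-weighted shortest-path metric: a path $P$ from $u$ to $v$ has length $\sum_{w\in P}w_w(P)\mu_w$. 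Fix an optimizer $\mu^*$ and set $d^*:=d_{\mu^*}$.

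\emph{Step 2 (Bourgain embedding).} Embed the finite metric $(V,d^*)$ into $\ell_1^k$ by Bourgain's theorem, obtaining $f\colon V\to\R^k$ with $d^*(u,v)/D\le\|f(u)-f(v)\|_1\le d^*(u,v)$ for $D=O(\log n)$. The upper bound, combined with the one-edge-path estimate $d^*(u,v)\le (\mu^*_u+\mu^*_v)/2$ valid for every $\{u,v\}\in E(G)$, furnishes the crucial \emph{short-edges property}: adjacent vertices are close in~$\ell_1$.

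\emph{Step 3 (Extracting a sparse vertex cut).} For each coordinate $i\in[k]$ and threshold $t\in\R$, set $A_t^{(i)}=\{v:f_i(v)<t\}$, $B_t^{(i)}=\{v:f_i(v)>t\}$, and take $S_t^{(i)}$ to be a boundary layer chosen so that no edge of $G$ joins $A_t^{(i)}\setminus S_t^{(i)}$ to $B_t^{(i)}\setminus S_t^{(i)}$. The standard identity
\[
\sum_{i=1}^{k}\int_{\R}|A_t^{(i)}|\cdot|B_t^{(i)}|\,dt=\sum_{\{u,v\}}\|f(u)-f(v)\|_1\ge \vcong(G)/D
\]
gives a lower bound on the total ``pair-separation'', while the short-edges property bounds $\sum_i\int|S_t^{(i)}|\,dt$ by a quantity controlled by $\sum_v\mu^*_v=1$. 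The averaging inequality $\min_{(i,t)}(a/b)\le (\sum a)/(\sum b)$ then extracts a single pair $(i,t)$ whose vertex cut has sparsity at most $O(\log n)/\vcong(G)$.

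\emph{Main obstacle.} The delicate point is the construction of $S_t^{(i)}$: it must simultaneously block every edge crossing the threshold (otherwise the triple $(A_t^{(i)}\setminus S_t^{(i)},B_t^{(i)}\setminus S_t^{(i)},S_t^{(i)})$ fails to be a valid vertex cut) and remain small in total measure $\sum_i\int|S_t^{(i)}|\,dt$. A naive fixed-width slab is too crude, since long edges force a large width while controlling the total mass forces a small one; the standard fix is an adaptive region-growing scheme whose slab width tracks the local $\mu^*$-mass near the threshold. Weaving this adaptive construction into the Bourgain embedding (or, equivalently, running region-growing directly on the LP metric $d^*$) is where the main bookkeeping of the proof lies.
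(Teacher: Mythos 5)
Steps 1 and 2 of your outline follow the paper's route: the LP dualization produces the vertex-weighted shortest-path metric $d_s$ (the paper writes $\frac{1}{\vcong(G)}=\min_s\sum_v s(v)/\sum_{u,v}d_s(u,v)$, equivalent to your normalized max-form), and a Bourgain-type embedding converts $d^*$ into a Lipschitz line structure. The paper actually uses its single-line-metric version (Theorem~\ref{t:bou-a}) rather than the full $\ell_1^k$ embedding, but that difference is cosmetic.

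The real problem is Step~3, which you yourself flag as the ``main obstacle'': you do not give a construction of the boundary layers $S_t^{(i)}$, nor the bound $\sum_i\int|S_t^{(i)}|\,dt\lesssim\sum_v\mu^*_v$, and you correctly observe that a fixed-width slab fails. In the paper this step is exactly Theorem~\ref{t:fhl} (Feige--Hajiaghayi--Lee), and it does not proceed by slab-cutting or region-growing at all. Instead, after sorting the vertices by $f$-value, for each threshold position $i$ one builds an auxiliary graph $G_i^+$ by attaching a super-source $x$ to $v_1,\dots,v_i$ and a super-sink $y$ to $v_{i+1},\dots,v_n$, and takes $S_i$ to be a \emph{minimum} $x$--$y$ vertex cut in $G_i^+$. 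The sparsity of $\min_i(A_i,B_i,S_i)$ is then bounded by writing $\sum_i\eps_i|S_i|$ and $\sum_v s(v)$ as integrals and invoking Menger's theorem: since $S_i$ is a minimum cut, there are $|S_i|$ vertex-disjoint $x$--$y$ paths, each must cross the interval $[f(v_i),f(v_{i+1})]$ along some edge, and the $1$-Lipschitz condition $|f(a)-f(b)|\le\frac12(s(a)+s(b))$ forces an endpoint of that edge to be a distinct ``paying'' vertex contributing its $s$-mass at the current level $z$. This pointwise domination of the two integrands is what replaces the adaptive-slab bookkeeping you left open. Without this (or a fully worked-out region-growing substitute), your proposal establishes the reduction to a metric problem but not the theorem itself.
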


The proof is deferred to Section~\ref{s:vdual}.

\begin{exercise}[Vertex sparsity and separators]\label{ex:vsep}
Let $\alpha>0$ and
let $G$ be a graph on $n$ vertices such that $\vspars(H)\le\alpha$ for every
induced subgraph $H$ of $G$ on at least $\frac23n$ vertices. Show that
$G$ has a separator of size at most $\alpha n^2$. 
\end{exercise}

\heading{String graphs have large vertex congestion. }
The last ingredient  in the proof of the 
separator theorem for string graphs is
the following result about string graphs.

\begin{proposition}\label{p:str-vcong}
For every connected string graph $G$ with $n$ vertices and $m$ edges,
we have 
\[ \frac1{\vcong(G)}=O\Bigl(\frac{\sqrt m} {n^2}\Bigr).
\]
\end{proposition}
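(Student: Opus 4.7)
\ The aim is to show $\vcong(G)\ge c\,n^2/\sqrt m$, i.e.\ $\lambda:=\vcong(G)=\Omega(n^2/\sqrt m)$. My strategy is to convert an optimal multicommodity flow in $G$ into a random drawing of $K_n$ built out of string-arcs, and to compare its expected pair-crossing number against the pair-crossing version of the crossing lemma for $K_n$.

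\textbf{Curves and the witness property.}\ Fix an optimal flow $\varphi$ with $\vcong(\varphi)=\lambda$, rescaled so that $\sum_{P\in\PP_{uv}}\varphi(P)=1$ for every pair $\{u,v\}$. In a standard string representation of $G$, choose a point $p_v\in\gamma_v$ (not on any other string) for each vertex $v$ and a representative intersection point $q_{ww'}\in\gamma_w\cap\gamma_{w'}$ for each edge $\{w,w'\}\in E(G)$. For each path $P=(v_0,\ldots,v_k)$ carrying flow, define a curve $c_P$ from $p_{v_0}$ to $p_{v_k}$ by concatenating, for $i=0,\ldots,k$, the arc of $\gamma_{v_i}$ between $q_{v_{i-1}v_i}$ and $q_{v_iv_{i+1}}$ (with $q_{v_{-1}v_0}:=p_{v_0}$, $q_{v_kv_{k+1}}:=p_{v_k}$); local loop-shortcutting, as in the proof of Lemma~\ref{l:onecross}, makes each $c_P$ a simple arc and cannot create new pair-crossings. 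The key property is $c_P\subseteq\bigcup_{v\in V(P)}\gamma_v$, so any intersection $q\in c_P\cap c_{P'}$ is a \emph{witness}: there exist $w\in V(P)$ and $w'\in V(P')$ with either $w=w'$ or $\{w,w'\}\in E(G)$.

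\textbf{The two bounds.}\ Independently for each pair $a=\{u,v\}$, sample $X_a\in\PP_{uv}$ with $\Prob{X_a=P}=\varphi(P)$, and let $D$ be the random drawing of $K_n$ whose vertices are the $p_v$ and whose edges are the $c_{X_a}$. Since $D$ is a drawing of $K_n$, every realization has $\PCR(D)\ge\PCR(K_n)$; the random-sampling proof of the crossing lemma carries over verbatim to $\PCR$ (the only planarity fact used, namely that $\PCR(H)=0$ forces $H$ planar, is a special case of Hanani--Tutte), giving $\PCR(K_n)=\Omega(n^4)$ and hence $\Ex{\PCR(D)}=\Omega(n^4)$. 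For the upper bound, put $E'=\{(w,w')\in V\times V : w=w'\text{ or }\{w,w'\}\in E(G)\}$, so $|E'|=n+2m$. The witness property bounds $\PCR(D)$ by half the number of ordered tuples $(a,a',w,w')$ with $a\ne a'$, $(w,w')\in E'$, $w\in V(X_a)$ and $w'\in V(X_{a'})$. Using independence of $X_a,X_{a'}$ for $a\ne a'$ together with the crucial estimate
\[
  \sum_a \Prob{w\in V(X_a)}=\sum_{P:\,w\in P}\varphi(P)\le 2\vcong(w)\le 2\lambda
\]
(the factor $2$ absorbs the endpoint weight $\tfrac12$ in the definition of $\vcong$), a short summation over $(w,w')\in E'$ yields $\Ex{\PCR(D)}\le 2\lambda^2|E'|=O(\lambda^2 m)$, where the last step uses $m\ge n-1$ from connectedness.

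Comparing $\Omega(n^4)\le\Ex{\PCR(D)}\le O(\lambda^2 m)$ gives $\lambda\ge\Omega(n^2/\sqrt m)$, which is the proposition. The main technical care is needed in the geometric hygiene of the curves $c_P$---eliminating self-intersections and triple points so that $D$ is a bona fide drawing of $K_n$---and in verifying the pair-crossing form of the crossing lemma $\PCR(K_n)=\Omega(n^4)$; once these are in place, the combinatorial witness count that closes the loop is a routine calculation.
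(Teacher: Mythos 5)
Your proposal is correct and follows essentially the same route as the paper: both turn a string representation together with an optimal multicommodity flow into a random drawing of $K_n$ whose edges ride along concatenated string arcs, use the observation that two drawn edges can only cross at a witness pair $(w,w')$ with $w=w'$ or $\{w,w'\}\in E(G)$, bound the expected number of crossing pairs by $O(m\,\vcong(G)^2)$ via the congestion estimate $\sum_{P\ni w}\varphi(P)\le 2\vcong(w)$, and compare this against $\PCR(K_n)=\Omega(n^4)$. The only cosmetic difference is in how the $\Omega(n^4)$ lower bound is justified: you invoke the pair-crossing version of the crossing lemma (which the paper mentions as an alternative, citing Pach--T\'oth), whereas the paper's Lemma~\ref{l:pcr-Kn} gives a more elementary direct count using Fact~\ref{f:K5i} and the ${n\choose 5}/(n-4)$ bound.
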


This is the only specific property of string graphs used in the proof of the
separator theorem. The next section is devoted to the proof
of this proposition.

\begin{proofhd}{Proof of the separator theorem for string graphs
(Theorem~\ref{t:strsep})}
Let $G$ be a string graph with $n$ vertices and $m\ge n$ edges.
We have $1/\vcong(G)=O(\sqrt m/n^2)$ by Proposition~\ref{p:str-vcong}.
By approximate duality (Theorem~\ref{t:vleightonrao}), we have
$\vspars(G)=O((\log n)\sqrt m/n^2)$, and 
so $G$ has a separator of size $O(\sqrt m\,\log n)$
according to Exercise~\ref{ex:vsep}.
\end{proofhd}

\begin{exercise} Let $G$ be a string graph with $m$ edges whose
maximum degree is bounded by a constant $\Delta$. Derive from
Theorem~\ref{t:leightonrao} (the edge version of the approximate duality)
and from Proposition~\ref{p:str-vcong} that $G$ has a separator
of size $O(\sqrt m\,\log m)$, where the implicit constant 
may depend on~$\Delta$.
\end{exercise}

\section{String graphs have large vertex congestion}

The strategy of the proof of Proposition~\ref{p:str-vcong} is this:
Given a string representation of an $n$-vertex graph $G$ and a multicommodity
flow in $G$ with a small vertex congestion, we will construct a drawing of $K_n$
in which only a small number of edge pairs cross.  This will contradict
the following result:

\begin{lemma}\label{l:pcr-Kn} For $n\ge 5$, $\PCR(K_n)=\Omega(n^4)$.
\end{lemma}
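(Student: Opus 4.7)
\medskip

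\noindent\textbf{Plan.} The plan is to prove the analogue of the standard crossing lemma for the pair-crossing number, namely that every graph with $n$ vertices and $m$ edges satisfies $\PCR(G) = \Omega(m^3/n^2)$ whenever $m$ is large enough compared to $n$; then applied to $K_n$ with $m = \binom{n}{2}$ this yields $\PCR(K_n) = \Omega(n^4)$. The classical crossing lemma is usually stated for $\CR$, but its probabilistic derivation depends only on a linear-in-$m$ lower bound coming from planarity, and such a bound works verbatim for $\PCR$.

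\medskip

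\noindent\textbf{Step 1 (linear bound).} First I would observe that for every graph $G$ with $n \ge 3$ vertices and $m$ edges,
\[
\PCR(G) \ge m - 3n + 6.
\]
Indeed, take a drawing $D$ witnessing $\PCR(G)$ and, from each crossing pair of edges, delete one edge; the remaining drawing is planar, hence the number of surviving edges is at most $3n-6$, giving the bound. By the Hanani--Tutte--style observation made earlier, $\PCR(G)=0$ is equivalent to planarity, which is the $m = 3n-6$ base case.

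\medskip

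\noindent\textbf{Step 2 (random subgraph amplification).} Now I would apply the standard probabilistic argument. Fix a drawing $D$ of $G$ realizing $\PCR(G)$, let $p \in (0,1]$ to be chosen, and let $V' \subseteq V$ be the random subset obtained by including each vertex independently with probability $p$. Let $G'$ be the induced subgraph with the inherited drawing $D'$, and let $n'$, $m'$, $X$ denote the numbers of vertices, edges, and crossing pairs of $D'$. Since a crossing pair in $D$ survives in $D'$ exactly when its four (distinct) endpoints all lie in $V'$,
\[
\Ex{n'} = pn, \quad \Ex{m'} = p^2 m, \quad \Ex{X} \le p^4\, \PCR(G).
\]
Combining with $\PCR(G') \le X$ and the linear bound $\PCR(G') \ge m' - 3n'$ from Step~1, taking expectations and rearranging gives
\[
\PCR(G) \ge \frac{m}{p^2} - \frac{3n}{p^3}.
\]
Choosing $p = 6n/m$ (which lies in $(0,1]$ when $m \ge 6n$) yields
\[
\PCR(G) \ge \frac{m^3}{36 n^2} - \frac{m^3}{72 n^2} = \frac{m^3}{72 n^2}.
\]

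\medskip

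\noindent\textbf{Step 3 (apply to $K_n$).} For $G = K_n$ with $n \ge 13$, we have $m = \binom{n}{2} \ge 6n$, so Step~2 gives $\PCR(K_n) \ge m^3/(72 n^2) = \Omega(n^4)$. For the finitely many values $5 \le n \le 12$, non-planarity of $K_5$ gives $\PCR(K_n) \ge 1$, and by adjusting the hidden constant the bound $\PCR(K_n) \ge c n^4$ holds for all $n \ge 5$.

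\medskip

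\noindent\textbf{Expected obstacle.} The only subtle point is the justification of $\Ex{X} \le p^4 \PCR(G)$: one must verify that two edges forming a crossing pair cannot share a vertex (they don't, since a shared endpoint is not counted as a crossing in our drawing convention), so that each surviving crossing pair requires all four independent endpoints to be selected. Everything else is routine.
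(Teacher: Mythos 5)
Your approach is genuinely different from the paper's: you aim to prove the full pair-crossing analogue of the crossing lemma, $\PCR(G)=\Omega(m^3/n^2)$, and then specialize to $K_n$, whereas the paper proves Lemma~\ref{l:pcr-Kn} directly by double-counting over $5$-element vertex subsets, using only Fact~\ref{f:K5i} (every plane drawing of $K_5$ has two \emph{independent} crossing edges). The paper's route is more elementary and sidesteps the crossing-lemma machinery entirely.

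However, your Step~2 has a genuine gap, exactly at the spot you flag as the ``expected obstacle''. The assertion that a crossing pair always has four distinct endpoints is false: two edges sharing a vertex may still cross at some \emph{other} point of the drawing, and such a pair does count toward $\PCR$. The convention ``sharing a vertex does not count'' only excludes the common endpoint itself from being a crossing; it does not prevent adjacent edges from crossing elsewhere. For $\CR$-minimizing drawings one can eliminate adjacent crossings (via the single-crossing lemma and tail-swapping), but for $\PCR$-minimizing drawings no such reduction is available, and the tail-swap can in fact \emph{increase} the number of crossing pairs (e.g., an edge $f$ that crossed both halves of $e$ and neither half of $e'$ crossed one edge before and two edges after). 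Without this, the honest bound is only $\Ex{X}\le p^3\,\PCR(G)$, which after the same optimization yields $\PCR(K_n)=\Omega(n^3)$, short of what the lemma requires. The standard repair is to run the whole argument with \emph{independent} crossing pairs only (those do have four distinct endpoints, so $p^4$ is correct), replacing your Step~1 by the statement that every drawing of $G$ has at least $m-3n+6$ independent crossing pairs; that lower bound follows from the strong Hanani--Tutte theorem, and since the number of independent crossing pairs is itself a lower bound on $\PCR$, this recovers $\Omega(m^3/n^2)$. This is essentially the correction in Pach and T\'oth's Remark~2 that the paper points to right after Lemma~\ref{l:pcr-Kn}; as written, your proof does not close without it.
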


The proof of this lemma relies on the following fact.

\begin{fact}\label{f:K5i}
 In every plane drawing of $K_5$, some two \emph{independent}
edges intersect, where independent means that the edges do not share a
vertex.
\end{fact}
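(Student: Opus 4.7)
The plan is to derive Fact~\ref{f:K5i} as an immediate corollary of the Hanani--Tutte theorem, which was already mentioned earlier in this paper: if a graph admits a drawing in which every two non-adjacent edges cross an even number of times, then the graph is planar.

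I would argue by contradiction. Suppose $K_5$ has a drawing $D$ in which no two independent edges intersect. Then, trivially, every pair of non-adjacent edges crosses zero times in $D$, and $0$ is an even number. Applying the Hanani--Tutte theorem to $D$ would force $K_5$ to be planar, contradicting its well-known non-planarity (by Euler's formula, a simple planar graph on $n\ge 3$ vertices has at most $3n-6$ edges, while $K_5$ has $10>9$ edges). This is essentially a one-line deduction once Hanani--Tutte is available.

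The only delicate point, if one wishes to avoid invoking Hanani--Tutte as a black box, is to find a self-contained elementary argument. A natural attempt is to show that any crossing between two adjacent edges can be removed by a local ``swap'': for edges $uv$ and $uw$ crossing at a point $x$, reconnect the four arcs near $x$ so that the new $uv$ first follows the old $uw$ from $u$ to $x$ and then the old $uv$ from $x$ to $v$, and symmetrically for the new $uw$. After finitely many such swaps one would reach a planar drawing of $K_5$, the desired contradiction. The hard part will be that this operation reassigns the crossings of $uv$ and $uw$ with third edges: a crossing of the old $uw$ with some edge $g$ becomes a crossing of the new $uv$ with $g$, and if $g$ was adjacent to $uw$ but independent of $uv$ (both possible in $K_5$, since $u,v,w$ leave two free vertices), then an adjacent--adjacent crossing gets converted into an independent--independent crossing, violating the standing assumption. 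For this reason an elementary ``swap-until-planar'' argument requires care, and the Hanani--Tutte route is by far the cleanest.
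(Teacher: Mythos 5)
Your deduction from the Hanani--Tutte theorem is correct and is exactly the route the paper indicates: it asserts the fact as a consequence of Hanani--Tutte (while remarking that this is ``too big a hammer'') and gives no other proof, so your two-line argument is the intended one. Your diagnosis of why the elementary adjacent-swap attempt fails---a swap can turn an adjacent-adjacent crossing of $uw$ with an edge $g$ through $w$ into an independent crossing of the new $uv$ with $g$---is also precisely the answer to Exercise~\ref{ex:badK5pf}.
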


This fact is a consequence of the Hanani--Tutte theorem 
mentioned above Proposition~\ref{p:simple-crpcr},
although that theorem is somewhat too big a hammer
for this purpose. But proving the fact
rigorously is harder than it may seem, even if we
assume nonplanarity of $K_5$ as known (although
a rigorous proof of the nonplanarity
is almost never included in  graph theory courses).

\begin{exercise}\label{ex:badK5pf} Find a mistake
in the following ``proof'' of Fact~\ref{f:K5i}:
Consider a (standard) drawing of $K_5$. If two independent
edges cross, we are done, and otherwise, some two edges
sharing a vertex cross. But such crossings can be removed
by the following transformation
\immfig{j-untangle-adj}
and so eventually we reach a plane drawing---a contradiction.
\end{exercise}

\begin{proofof}{Lemma~\ref{l:pcr-Kn}} By Fact~\ref{f:K5i},
in every drawing of $K_n$, every $5$-tuple 
of vertices induces  a pair of independent
edges that cross.
A given pair of independent crossing edges
determines $4$ vertices of the 5-tuple, and so the number
of 5-tuples inducing this particular pair of edges
is at most $n-4$. So $\PCR(K_n)\ge
{n\choose 5}/(n-4)=\Omega(n^4)$.
\end{proofof}

We remark that Lemma~\ref{l:pcr-Kn} also follows from a generally
useful result, the \defi{crossing lemma} of Ajtai et al.~\cite{acns-cfs-82} and
 Leighton \cite{Leighton84},
which asserts that every graph with $n$ vertices
and $m\ge 4n$ edges has crossing number $\Omega(m^3/n^2)$.
We actually need a version of the lemma for the pair-crossing
number, which holds with the same bound, as was observed in
Pach and T\'oth \cite[Thm.~3]{PachToth-cr}.\footnote{The argument
in their proof is not quite correct, but the problem is rectified in 
Remark~2 in Section~3 of~\cite{PachToth-cr}.} This proof does not avoid
Fact~\ref{f:K5i}---it actually relies on a generalization of it.

\begin{proofof}{Proposition~\ref{p:str-vcong}}
Let $G=(V,E)$,  
and let $(\gamma_v:v\in V)$ be a string representation of $G$.
We are going to produce a drawing of the complete graph
$K_V$ on the vertex set~$V$.

We draw each vertex $v\in V$ as a
point $p_v\in\gamma_v$, in such a way that
all the $p_v$ are distinct. 

For every edge
$\{u,v\}\in {V\choose 2}$ of the complete graph, we pick a path
$P_{uv}$ from $\PP_{uv}$, in a way to be specified later.
Let us enumerate the vertices along $P_{uv}$
as $v_0=u,v_1,v_2,\ldots,v_k=v$. Then we draw the edge $\{u,v\}$
of $K_V$ in the following manner: we start at $p_u$, follow
$\gamma_{u}$ until some (arbitrarily chosen) intersection with
$\gamma_{v_1}$, then we follow $\gamma_{v_1}$ until some intersection
with $\gamma_{v_2}$, etc., until we reach $\gamma_v$ and $p_v$ on it.
\immfig{j-along-path}
In this way we typically do not get a standard drawing, since
edges may share segments, have self-intersections and
triple points, and they may pass 
through vertices. However, we can obtain a standard drawing by 
shortcutting loops and a small
perturbation of the edges, in such a way that no new intersecting
pairs of edges are created.  Hence, by Lemma~\ref{l:pcr-Kn},
there are $\Omega(n^4)$ intersecting pairs of edges in
the original drawing as well. We are going to estimate the
number of intersecting pairs in a different way.

We note that the drawings of two edges $\{u,v\}$ and $\{u',v'\}$
cannot intersect unless there are vertices $w\in P_{uv}$ and 
$w'\in P_{u'v'}$ such that
$\gamma_w\cap \gamma_{w'}\ne \emptyset$,
i.e., $\{w,w'\}\in E(G)$ or $w=w'$.
Let us write this latter condition as $P_{uv}\sim P_{u'v'}$.

How do we select the paths $P_{uv}$? For this, we consider a flow
$\varphi$ for which $\vcong(G)$ is attained. Since there is a unit
flow between every pair of vertices $\{u,v\}$, the values
of $\varphi(P)$ define a probability distribution on $\PP_{uv}$.
We choose $P_{uv}\in\PP_{uv}$ from this distribution at random,
the choices independent
for different $\{u,v\}$.

The number $X$ of intersecting pairs of edges in this drawing
is a random variable, and we bound from above its expectation.
First we note that 
\[
\Prob{\,\{u,v\} \mbox{ and }\{u',v'\}\mbox{ intersect}}
\le \Prob{P_{uv}\sim P_{u'v'}}\]
\[
=
\sum_{P\in\PP_{uv},P'\in \PP_{u'v'}, P\sim P'} \Prob{P_{uv}=P\mbox{ and }
P_{u'v'}=P'}
\]
\[
= \sum_{P\in\PP_{uv},P'\in \PP_{u'v'}, P\sim P'} 
\varphi(P)\varphi(P')
\]
(for the last equality we have used independence).
Therefore
\begin{eqnarray*}
\Ex{X} &=& \sum_{\{\{u,v\},\{u',v'\}\}\in{{V\choose 2}\choose 2}} 
\Prob{\,\{u,v\} \mbox{ and }\{u',v'\}\mbox{ intersect}}
\\
&\le& \sum_{\{u,v\},\{u',v'\}}\ \ \  \sum_{P\in\PP_{uv},P'\in \PP_{u'v'}, P\sim P'} 
\varphi(P)\varphi(P')
\\
&=&\sum_{\{w,w'\}\in E{\rm~or~} w=w'}\ \ \ 
\sum_{P,P'\in\PP, w\in P,w'\in P'}\varphi(P)\varphi(P')
\\
&=&\sum_{\{w,w'\}\in E{\rm~or~} w=w'}
\biggl(\sum_{P\in\PP,w\in P}\varphi(P)\biggr)
\biggl(\sum_{P'\in\PP,w'\in P}\varphi(P')\biggr).
\end{eqnarray*}
The first sum in parentheses is at most $2\vcong(w)$,
and the second one at most $2\vcong(w')$; the $2$
is needed because of the paths $P$ for which $w$ is an end-vertex.
The number of terms in the outer sum is $|E|+n\le 2m$.
Altogether we  get $\Ex{X}\le 8m \vcong(G)^2$.

Since, on the other hand, we always have $X=\Omega(n^4)$, we obtain
$1/\vcong(G)=O(\sqrt m/n^2)$ as claimed.
\end{proofof}

\section{Flows, cuts, and metrics: the edge case}\label{s:edual}

Here we prove the edge version of the approximate flow/cut
duality, Theorem~\ref{t:leightonrao}. We essentially follow
an argument of Linial, London, and Rabinovich \cite{llr-liegc-94}.

\heading{Dualizing the linear program. } The first step of the
proof can be concisely expressed as follows: express $\econg(G)$
by a linear program, dualize it, and see what the dual 
means.\footnote{We assume that the reader has heard about linear
programming and the duality theorem in it; if not, we recommend
consulting a suitable source. Here is a very brief summary:
A \defi{linear program} is the computational problem of maximizing
(or minimizing) a linear
function over the intersection of finitely many half-spaces in
$\R^n$ (i.e., a convex polyhedron). Every linear program can
be converted to a \emph{standard form}: $\max_{\xx\in P}\cc^T\xx$
with $P=\{\xx\in\R^n: A\xx\le\bb, \xx\ge \bzero\}$, where
 $\cc\in\R^n$, $\bb\in\R^m$, $A$ is an $m\times n$ matrix, and the inequalities between vectors
are meant componentwise. The \emph{dual} of this linear program
is $\min_{\yy\in D}\bb^T\yy$, $D=\{\yy\in\R^m, A^T\yy\ge\cc,\yy\ge\bzero\}$,
and the \emph{duality theorem of linear programming}
asserts that if $P\ne\emptyset\ne D$, then 
$\min_{\xx\in P}\cc^T\xx=\max_{\yy\in D}\bb^T\yy$.}

It is slightly nicer to work with $\frac1{\econg(G)}$, which can be
expressed as the maximum $t$ such that there is a flow $\varphi$
with edge congestion at most $1$ that sends at least $t$ between
every pair of vertices. The resulting linear program has variables
$t\ge 0$ and 
$\varphi(P)$, $P\in\PP$, and it looks like this:
\begin{eqnarray}
\max \Big\{ t\ge 0\!\!\!&\!\!\!:\!\!\!& \varphi(P)\ge 0\mbox{ for all }P\in\PP,\nonumber\\
&& \textstyle \sum_{P:e\in P} \varphi(P)\le 1\mbox{ for all }e\in E,\label{e:cng}\\
&& \textstyle \sum_{P\in\PP_{uv}}\varphi(P)\ge t\mbox{ for all }\{u,v\}\in
{\textstyle {V\choose 2}}
\Big\}.\label{e:flw}
\end{eqnarray}
The variables of the dual linear program are $x_e$, $e\in E$,
corresponding to the constraints (\ref{e:cng}), and
$y_{uv}$, $\{u,v\}\in{V\choose 2}$, corresponding to
the constraints (\ref{e:flw}). The dual reads
\begin{eqnarray}
\min \Big\{\textstyle  \sum_{e\in E}x_e\!\!\!&\!\!\!:\!\!\!& x_e,y_{uv}\ge 0,\nonumber\\
&&\textstyle  \sum_{e\in P} x_e\ge y_{uv}\ifafour\mbox{ for every }\else,\ \fi
P\in\PP_{uv},~
\{u,v\}\in {\textstyle {V\choose 2}},\label{e:shopa}\\
&&\textstyle  \sum_{\{u,v\}\in{V\choose 2}}y_{uv}\ge 1
\Big\}, \label{e:tot}
\end{eqnarray}
and its value also equals $\frac1{\econg(G)}$ by the duality theorem.
(Checking this claim carefully takes some work, and we expect only the 
most diligent readers to verify it---the others may simply take it for granted,
since the linear programming duality is a side-topic for us.)

Fortunately, the dual linear program has a nice interpretation.
We think of the variables $x_e$ as edge weights, and then the constraints
(\ref{e:shopa}) say that $y_{uv}$ is at most the sum of weights along
every $u$-$v$ path. From this it is easy to see that in an optimal
solution of the dual linear program, each $y_{uv}$ is the length
of a shortest $u$-$v$ path under the edge weights given by the $x_e$.
When the $y_{uv}$ are given in this way, we may also assume that
for every edge $e=\{u,v\}\in E$, we have $x_e=y_{uv}$:
Indeed,  if $x_e>y_{uv}$, then there is a shortcut between $u$ and $v$
bypassing the edge $e$, i.e., a $u$-$v$ path of length $y_{uv}$.
So if we decrease $x_e$ to the value $y_{uv}$, the length
of a shortest path between every two vertices remains unchanged
and thus no inequality in the linear program is violated, while
$\sum_{e\in E}x_e$ decreases.

Thus, if we write $d_w$ for the shortest-path (pseudo)metric\footnote{We
recall that a \defi{metric} on a set $V$ is a mapping $d\:V\times V\to
[0,\infty)$ satisfying (i) $d(u,v)=d(v,u)$ for all $u,v$;
(ii) $d(u,u)=0$ for all $u$; (iii) $d(u,v)>0$ whenever $u\ne v$;
and (iv) $d(u,v)\le d(u,x)+d(x,v)$ for all $u,v,x\in V$ (triangle
inequality). A \emph{pseudometric} satisfies the same axioms
except possibly for (iii).}
induced on $V$ by an edge weight function $w\:E\to [0,\infty)$,
we can express the conclusion of the dualization step as
\begin{equation}\label{e:thatfrac}
\frac1{\econg(G)}=\min \Bigl\{
\frac{\sum_{\{u,v\}\in E}d_w(u,v)}{\sum_{\{u,v\}\in {V\choose 2}}d_w(u,v)}:
\ w\:E\to[0,\infty), w\not\equiv 0\Bigr\}.
\end{equation}
Here $w\not\equiv 0$ means that $w$ is not identically $0$; note that
we replaced the constraint (\ref{e:tot}), requiring the sum
of all distances under $d_w$ to be at least~$1$, by dividing
the minimized function by the sum of all distances.

\heading{Cut metrics and line metrics. } To make further progress,
we will investigate the minimum of the same ratio as in
\eqref{e:thatfrac}, but over different classes of metrics.

A \defi{cut metric} on a set $V$ is a pseudometric\footnote{Cut
\emph{metric} is really a misnomer, since a cut metric is
almost never a metric; we should speak of a
\emph{cut pseudometric}, but we conform to the usage in the literature.
A similar remark applies to line metrics considered below.}
 $c$ given by $c(u,v)=|f(u)-f(v)|$ for some function $f\:V\to \{0,1\}$.

By comparing the definitions, we can express the edge sparsity
of a graph as
\begin{equation}\label{e:min-cutm}
\espars(G)= \min \Bigl\{
\frac{\sum_{\{u,v\}\in E}c(u,v)}{\sum_{\{u,v\}\in {V\choose 2}}c(u,v)}:
c\mbox{ a cut metric on }V, c\not\equiv 0\Bigr\}
\end{equation}
(please check). 

Next, it turns out that we can replace cut metrics by line metrics
in \eqref{e:min-cutm} and the minimum stays the same. Here
a \defi{line metric} is a pseudometric $\ell$ such that
$\ell(u,v)=|f(u)-f(v)|$ for some function $f\:V\to\R$.
We leave the proof as an instructive exercise.

\begin{exercise} Show that the minimum in
\begin{equation}\label{e:linmetr}
\min \Bigl\{
\frac{\sum_{\{u,v\}\in E}\ell(u,v)}{\sum_{\{u,v\}\in {V\choose 2}}\ell(u,v)}:
\ell\mbox{ a line metric on }V, \ell\not\equiv 0\Bigr\}
\end{equation} 
is attained by a cut metric, and hence it also equals $\espars(G)$.
(Hint: show that if the function $f$ defining a line metric $\ell$
attains at least three distinct values, then some value can be eliminated.)
\end{exercise}

A key result that allows us to compare the minimum \eqref{e:thatfrac}
over all shortest-path metrics
with the minimum \eqref{e:linmetr} over all line metric follows from
the work of Bourgain \cite{Bourgain-l2}. His main theorem was
formulated differently, but his proof immediately yields the
following formulation, which is the most convenient for our purposes.

\begin{theorem}\label{t:bou-a}
 Let $V$ be an $n$-point set. For every (pseudo)metric
$d$ on $V$ there exists a line metric $\ell$ on $V$ satisfying
\begin{enumerate}\itemsep=0pt\topsep=0pt
\item[\rm(i)] ($\ell$ is below $d$) $\ell(u,v)\le d(u,v)$ for all $u,v\in V$, and
\item[\rm(ii)] (the average distance not decreased too much)
\[
{\textstyle\sum_{\{u,v\}\in{V\choose 2}}\ell(u,v)} \ge \frac c{\log n}
{\textstyle\sum_{\{u,v\}\in{V\choose 2}} d(u,v)},
\]
for a constant $c>0$.
\end{enumerate} 
\end{theorem}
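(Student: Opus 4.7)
My plan is to prove the theorem by Bourgain's probabilistic Fr\'echet embedding. For any nonempty $A\subseteq V$, define $f_A\:V\to\R$ by $f_A(v):=\min_{a\in A} d(v,a)$ and set $\ell_A(u,v):=|f_A(u)-f_A(v)|$. The triangle inequality for $d$ yields $|d(u,A)-d(v,A)|\le d(u,v)$, so every $\ell_A$ is a line metric (the pullback of the line metric on $\R$ via $f_A$) satisfying condition~(i) automatically. Hence it suffices to exhibit a \emph{random} subset $A\subseteq V$ with
\[
\Ex{\sum_{\{u,v\}\in{V\choose 2}}\ell_A(u,v)}\;\ge\;\frac{c}{\log n}\sum_{\{u,v\}\in{V\choose 2}}d(u,v),
\]
since some realization must do at least as well as the average.

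For the construction I would take $T:=\lceil\log_2 n\rceil$, draw a scale $t$ uniformly from $\{1,\dots,T\}$, and, conditional on $t$, include each vertex in $A$ independently with probability $2^{-t}$. By linearity of expectation, the displayed inequality reduces to the pointwise bound
\[
\Ex{\ell_A(u,v)}\;\ge\;\frac{c\,d(u,v)}{\log n}\qquad\mbox{for every pair }u\ne v,
\]
which is (a form of) Bourgain's pair lemma.

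The pair lemma, which is the technical heart, is a telescoping-radii computation. Fix $u,v$ with $D:=d(u,v)>0$ and for each $k\ge 0$ choose radii $r_k^u,r_k^v\in[0,D/3]$ so that $|B(u,r_k^u)|$ and $|B(v,r_k^v)|$ are both of order $2^k$ (up to truncation at $D/3$, which keeps the $u$- and $v$-balls disjoint). The crucial observation is that if $A_t$, sampled at density $2^{-t}$, simultaneously (i)~hits $B(u,r_{t-1}^u)$ and (ii)~misses $B(v,r_t^v)$, then $f_{A_t}(u)\le r_{t-1}^u$ while $f_{A_t}(v)\ge r_t^v$, contributing $\ell_{A_t}(u,v)\ge r_t^v-r_{t-1}^u$; the symmetric event with $u$ and $v$ interchanged is analogous. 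Since a density-$2^{-t}$ sample hits a set of size $\Omega(2^t)$ with constant probability and misses a \emph{disjoint} set of size $O(2^t)$ with constant probability, and these two events depend on disjoint subsets of $V$ (by disjointness of the balls), the joint event has probability $\Omega(1)$ at every scale. Summing over $t$ telescopes to $\Omega(D)$ in total, and dividing by the $T=O(\log n)$ uniformly-chosen scales yields the promised $\Omega(D/\log n)$.

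The main obstacle is calibrating the radii $r_k^u,r_k^v$ so that both halves of the crucial event have constant probability \emph{simultaneously at every scale}, and so that the telescoping sum genuinely reaches $\Omega(D)$ rather than being consumed by the truncation at $D/3$. In practice one defines $r_k^u$ as the supremum of radii $r$ with $|B(u,r)|<2^k$ (and $r_k^v$ analogously), and stops at the first scale where one of these radii would exceed $D/3$; the telescoping then produces $\Omega(D)$ with a constant depending only on this truncation. Everything else---the $1$-Lipschitz property of $f_A$, the reduction from~(ii) to the pointwise expected-distance estimate, and derandomization by averaging---is routine once the pair lemma is in hand.
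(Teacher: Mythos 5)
Your proposal is correct in outline, but it takes a genuinely different route from the paper. The paper explicitly proves only a \emph{weaker} version of this theorem, with $\log^2 n$ in place of $\log n$, noting that ``this weakening makes the proof simpler, while preserving the main ideas.'' Both proofs share the same Fr\'echet embedding $f(v)=d(v,A)$, the same $1$-Lipschitz observation, and the same density-$2^{-j}$ random-sampling scheme with $j$ uniform over $O(\log n)$ scales; what differs is how the scales are aggregated. The paper builds a single chain of concentric balls $B_0\subseteq B_1\subseteq\cdots\subseteq B_k$, where $B_i$ has radius $i\Delta$ with $\Delta:=d(u,v)/(2k-1)$ and is centered \emph{alternately} at $u$ (even $i$) and at $v$ (odd $i$); by pigeonhole some consecutive pair satisfies $|B_{i+1}|/|B_i|\le 2$, and at the one matching density the sample hits $B_i$ while missing $B_{i+1}^\circ$ with constant probability, which gives $|f(u)-f(v)|\ge\Delta$ with probability $\Omega(1/\log n)$. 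Since $\Delta=\Theta(d(u,v)/\log n)$, this yields only $\Omega(d(u,v)/\log^2 n)$ in expectation. Your scheme is Bourgain's original telescoping argument: it collects a constant-probability contribution at \emph{every} scale, and the telescoping sum over scales recovers $\Omega(d(u,v))$ before dividing by the $O(\log n)$ scale choices, yielding the full $\log n$ bound. So what your route buys is the theorem as actually stated; what the paper's route buys is a noticeably shorter computation, since locating one good scale is easier than controlling the whole telescoping sum.

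One spot you flag as an ``obstacle'' is indeed the place to be careful. With two separate radius chains $r_k^u,r_k^v$ (rather than the single joint chain $\rho_k:=\min\{r:|B(u,r)|\ge 2^k\mbox{ and }|B(v,r)|\ge 2^k\}$ of the classical Linial--London--Rabinovich presentation), the increment $r_t^v-r_{t-1}^u$ can be negative, so the pair lemma must combine the two symmetric events. Writing $s_t:=r_t^u+r_t^v$, each of the two events separately lower-bounds $\Ex{\ell_{A_t}(u,v)}$, so one gets
\[
\Ex{\ell_{A_t}(u,v)}\;\ge\;c\cdot\max\bigl((r_t^v-r_{t-1}^u)_+,\,(r_t^u-r_{t-1}^v)_+\bigr)\;\ge\;\tfrac{c}{2}\,(s_t-s_{t-1}),
\]
and this \emph{does} telescope, with the truncation at $D/3$ ensuring the final $s_\tau\ge D/3$ so the total is $\ge cD/6$. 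The joint chain $\rho_k$ avoids the positive-part bookkeeping at the cost of a case distinction (at each scale one of the two balls is the small one to miss).
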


For completeness, we demonstrate the main idea of the proof 
in Section~\ref{s:bourg} below.

\begin{proofof}{Theorem~\ref{t:leightonrao}}
Let $d^*$ be a shortest-path metric attaining the minimum
in the expression \eqref{e:thatfrac} for $\frac1{\econg(G)}$.
We apply Theorem~\ref{t:bou-a} with $d=d^*$ and obtain a line
metric $\ell^*$ satisfying (i), (ii) in the theorem. Then
\begin{eqnarray*}
\frac1{\econg(G)}&=&\frac{\sum_{\{u,v\}\in E}d^*(u,v)}{\sum_{\{u,v\}\in {V\choose 2}}d^*(u,v)}\ge 
\frac c{\log n} \cdot\frac{\sum_{\{u,v\}\in E}\ell^*(u,v)}{\sum_{\{u,v\}\in {V\choose 2}}\ell^*(u,v)}\\
&\ge& \frac c{\log n} \cdot\min \Bigl\{
\frac{\sum_{\{u,v\}\in E}\ell(u,v)}{\sum_{\{u,v\}\in {V\choose 2}}\ell(u,v)}:
\ell\mbox{ a line metric on }V, \ell\not\equiv 0\Bigr\}\\
&=& \frac c{\log n} \cdot \espars(G).
\end{eqnarray*}
\end{proofof}

\section{Proof of a weaker version of Bourgain's theorem}\label{s:bourg}

Here we prove a version of Theorem~\ref{t:bou-a} with $\log n$
replaced by $\log^2 n$; this weakening makes the proof simpler, while
preserving the main ideas.

Providing a line metric $\ell$ satisfying condition (i),
$\ell\le d$, is equivalent to providing a function $f\:V\to\R$
that is \defi{$1$-Lipschitz}, i.e., satisfies $|f(u)-f(v)|\le
d(u,v)$ for all $u,v\in V$.

A suitable $f$ is chosen at random, in the following steps.
\begin{enumerate}
\item Let $k$ be the smallest integer with $2^k\ge n$,
i.e., $k=\lceil\log_2 n\rceil$.
Choose an index $j\in\{0,1,\ldots,k\}$
uniformly at random, and set $p:=2^{-j}$.
\item Choose a random subset $A\subseteq V$, where each point
$v\in V$ is included in $A$ independently with probability~$p$.
\item Define $f$ by $f(u):=d(u,A)=\min_{a\in A} d(u,a)$.
\end{enumerate}

A nice thing about this way of choosing $f$ is that it is $1$-Lipschitz
for every $A\subseteq V$, as can be easily checked using the triangle 
inequality. So it remains to show that, with positive probability,
the line metric induced by $f$ satisfies a weaker version of condition (ii),
i.e., that it does not decrease the average distance too much.

We will actually prove that for every $u,v\in V$, $u\ne v$,
\begin{equation}\label{e:someprob}
\Prob{|f(u)-f(v)|\ge\textstyle \frac{c_0}{\log n}\cdot d(u,v)    }\ge
\frac{c_0}{\log n},
\end{equation}
where the probability is with respect to the random choice of $f$ as above,
and $c_0>0$ is a suitable constant.
Assuming \eqref{e:someprob}, passing to expectation,
 and summing over $\{u,v\}\in{V\choose 2}$,
we arrive at
\[
\Ex{\textstyle\sum_{\{u,v\}\in{V\choose 2}}|f(u)-f(v)|}\ge
\frac{c_0^2}{\log^2 n}\textstyle\sum_{\{u,v\}\in{V\choose 2}}d(u,v),
\]
and hence at least one $f$ satisfies (ii) with $\log^2 n$ instead of $\log n$.

So we fix $u,v$ and we aim at proving \eqref{e:someprob}. Let us set
$\Delta:=d(u,v)/(2k-1)$. We have $|f(u)-f(v)|=|d(u,A)-d(v,A)|$,
and the latter expression is at least $\Delta$ provided that,
for some $r\ge 0$, the set $A$ intersects the (closed) $r$-ball around $u$
and avoids the (open) $(r+\Delta)$ ball around $v$, or the other way round.
\immfig{j-rdelta}
In order for this event to have a non-negligible probability, we need
that the number of points in the bigger balls is not much
larger than in the smaller ball.
The trick for achieving this is to consider a system of balls
as in the next picture:
\immfig{j-boub}
The picture is for $k=4$. In general, $B_i$ is the closed
ball of radius $i\Delta$, $i=0,1,\ldots,k$, centered at $u$ for even $i$
and at $v$ for odd $i$. Let $B_i^\circ$ denote the corresponding open
ball (all points at distance strictly smaller than $i\Delta$
from the center).

Let $n_i$ be the number of points in $B_i$. We claim that
$n_{i+1}/n_i\le 2$ for some $i\in\{0,1,\ldots,k-1\}$; indeed, if not, then
$|B_k|>2^k\ge n$---a contradiction.

We fix such an $i$, and we also fix $j_0$ such that $n_i$
is approximately $2^{j_0}$; more precisely,
$2^{j_0}\le n_i<2^{j_0+1}$. 

Let $p=2^{-j_0}$, and let us pick a random $A$ as in the second
step of the choice of $f$ with this value of $p$. 
By a simple calculation, which we leave as an exercise, there is a 
constant $c_1>0$ such that
\[
\Prob{A\cap B_i\ne\emptyset\mbox{ and } A\cap B_{i+1}^\circ=\emptyset}\ge c_1.
\]

\begin{exercise}
Let $X,Y$ be disjoint sets, and let $A\subseteq X\cup Y$ be a random
subset, where each point of $X\cup Y$ is included in $A$ with probability $p$,
independent of all other points, with
$0<p\le\frac12$.
Assuming $\frac 1{2p}\le |X|,|Y| \le \frac 2p$, show that
$\Prob{A\cap X\ne\emptyset\mbox{ and } A\cap Y=\emptyset}\ge c_1$
for a constant $c_1>0$.
\end{exercise}

Now \eqref{e:someprob} follows easily: given $u,v$, the probability
of choosing $j=j_0$ is $\frac1{k+1}=\Omega(\frac1{\log n})$,
and conditioned on this choice, we have $\Prob{|f(u)-f(v)|\ge
\Delta}\ge c_1$. This concludes the proof of the weaker
version of Theorem~\ref{t:bou-a}.

\section{Flows, cuts, and metrics: the vertex case}\label{s:vdual}

Here we prove Theorem~\ref{t:vleightonrao}, the vertex case of the
approximate duality, and this will also conclude the quest for the
proof of the separator theorem for string graphs. Initially
 we proceed in a way similar to the edge case from 
Section~\ref{s:edual}, but the last step is more demanding and uses a nice 
method for producing sparse vertex cuts algorithmically.

\heading{Dualization again. } As before, we write $\frac1{\vcong(G)}$
as a linear program and dualize it. The linear program differs
from the one for $\frac1{\econg(G)}$ only in the second line:
\begin{eqnarray}
\max \Big\{ t\ge 0\!\!\!&\!\!\!:\!\!\!& \varphi(P)\ge 0\mbox{ for all }P\in\PP,\nonumber\\
&& \textstyle \sum_{P:v\in P} \grayhalf\varphi(P)\le 1\mbox{ for all }v\in V,\label{e:cngv}\\
&& \textstyle \sum_{P\in\PP_{uv}}\varphi(P)\ge t\mbox{ for all }\{u,v\}\in
{\textstyle {V\choose 2}}
\Big\}\label{e:flwv};
\end{eqnarray}
here the meaning of $\grayhalf$ is as in the definition of 
$\vcong(G)$ in Section~\ref{s:approxdual}.
In the dual, we have variables $y_{uv}$ indexed by pairs of vertices
and $z_v$ indexed by vertices, and it reads
\begin{eqnarray}
\min \Big\{\textstyle  \sum_{z\in V}z_v\!\!\!&\!\!\!:\!\!\!& z_v,y_{uv}\ge 0,\nonumber\\
&&\textstyle  \sum_{v\in P} \grayhalf z_v\ge y_{uv}
\ifafour\mbox{ for every }\else,\ \fi
P\in\PP_{uv},~
\{u,v\}\in {\textstyle {V\choose 2}},\label{e:shopav}\\
&&\textstyle  \sum_{\{u,v\}\in{V\choose 2}}y_{uv}\ge 1
\Big\}. \label{e:totv}
\end{eqnarray}
This, too, can be interpreted using a metric on $G$. This
time we have a function $s\:V\to[0,\infty)$ assigning weights to
vertices. Let us define the derived weight of an edge $e=\{u,v\}$
by $w(e):=\frac12(s(u)+s(v))$ and denote the corresponding shortest-path
metric by $d_s$. Then, in a way very similar to the edge case, one
can see that
\begin{equation}\label{e:thatfracv}
\frac1{\vcong(G)}=\min \Bigl\{
\frac{\sum_{v\in V}s(v)}{\sum_{\{u,v\}\in {V\choose 2}}d_s(u,v)}:
\ s\:V\to[0,\infty), s\not\equiv 0\Bigr\}.
\end{equation}
Here the convention with $\grayhalf$ for the vertex congestion pays off---the
dual has a nice interpretation in terms of shortest-path metrics.

Let $s^*$ be a weight function for which the minimum in \eqref{e:thatfracv}
is attained. Applying Bourgain's theorem (Theorem~\ref{t:bou-a})
to the metric $d_{s^*}$ yields a function $f^*\:V\to\R$ 
that is $1$-Lipschitz w.r.t.\ $d_{s^*}$ and satisfies
\[
\frac{\sum_{v\in V}s(v)}{\sum_{\{u,v\}\in {V\choose 2}}|f^*(u)-f^*(v)|}
=O\left(\frac{\log n}{\vcong(G)}\right).
\]

The following theorem of
Feige, Hajiaghayi, and Lee~\cite{feige2008improved}
then shows how such an $f^*$ can be used to produce sparse vertex
cuts in~$G$. This is the last step in the proof of 
Theorem~\ref{t:vleightonrao}.

\begin{theorem}\label{t:fhl}
Let $G$ be a graph, $s\:V\to[0,\infty)$ a weight function on
the vertices, $d_s$ the corresponding metric, and let $f\:V\to\R$
be a non-constant $1$-Lipschitz function w.r.t.\ $d_s$. Then
\[
\vspars(G)\le \frac{\sum_{v\in V}s(v)}
{\sum_{\{u,v\}\in {V\choose 2}}|f(u)-f(v)|}.
\]
\end{theorem}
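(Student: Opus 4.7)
The plan is to sweep a threshold $t\in\R$ across the values of $f$, producing for each $t$ a candidate vertex cut $(A_t,B_t,S_t)$, and then to apply a weighted averaging argument to extract one $t$ whose sparsity meets the claimed bound. Concretely, for $t\in\R$ I set
\[
A_t:=\{v\in V: f(v)+s(v)/2<t\},\quad
B_t:=\{v\in V: f(v)-s(v)/2>t\},\quad
S_t:=V\setminus(A_t\cup B_t).
\]
The $1$-Lipschitz property of $f$ forbids edges from $A_t$ to $B_t$: if $u\in A_t$ and $v\in B_t$ were adjacent, then $f(v)-f(u)>\tfrac12(s(u)+s(v))=d_s(u,v)$, contradicting $|f(u)-f(v)|\le d_s(u,v)$. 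So each triple is a candidate vertex cut, valid once $A_t$ and $B_t$ are both non-empty.

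Two integral computations drive the argument. First, $v\in S_t$ iff $|f(v)-t|\le s(v)/2$, so by Fubini
\[
\int_\R |S_t|\,dt=\sum_{v\in V} s(v).
\]
Second, writing $N_A(t):=|A_t\cup S_t|=|\{v:f(v)-s(v)/2\le t\}|$ and $N_B(t):=|B_t\cup S_t|=|\{v:f(v)+s(v)/2\ge t\}|$, I would expand $N_A(t)N_B(t)$ as a sum of indicators over ordered pairs $(u,v)\in V\times V$ and integrate, obtaining
\[
\int_\R N_A(t)N_B(t)\,dt=\sum_{u,v\in V}\max\bigl(0,(f(v)-f(u))+\tfrac12(s(u)+s(v))\bigr).
\]
A short case analysis, combining the $(u,v)$ and $(v,u)$ contributions for each unordered pair $\{u,v\}$, shows that every pair contributes at least $|f(u)-f(v)|$, so this integral is at least $\sum_{\{u,v\}\in\binom V2}|f(u)-f(v)|$.

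These two bounds, combined via weighted averaging with weight $N_A(t)N_B(t)\,dt$, produce some $t^*$ satisfying
\[
\vspars(A_{t^*},B_{t^*},S_{t^*})=\frac{|S_{t^*}|}{N_A(t^*)\,N_B(t^*)}\le\frac{\int |S_t|\,dt}{\int N_A(t)N_B(t)\,dt}\le\frac{\sum_v s(v)}{\sum_{\{u,v\}}|f(u)-f(v)|},
\]
which is exactly the desired bound. The step I expect to be the main obstacle is the technical verification that the minimizing $t^*$ yields a \emph{genuine} vertex cut, i.e.\ that both $A_{t^*}$ and $B_{t^*}$ are non-empty (only $A_{t^*}\cup S_{t^*}$ and $B_{t^*}\cup S_{t^*}$ are automatically non-empty from the averaging). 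I would handle this by restricting the sweep to the interval $(\alpha,\beta)$ with $\alpha:=\min_v(f(v)+s(v)/2)$ and $\beta:=\max_v(f(v)-s(v)/2)$, on which both $A_t$ and $B_t$ are forced to be non-empty; in the degenerate case $\alpha\ge\beta$, an explicit simple cut involving a pair of extreme vertices achieves sparsity below the right-hand side directly.
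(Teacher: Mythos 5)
Your approach is genuinely different from the paper's, and in fact simpler at its core. The paper orders vertices by $f$-value, builds auxiliary graphs $G_i^+$, computes a \emph{minimum} $x$--$y$ vertex cut $S_i$ in each by max-flow, and then invokes Menger's theorem to bound $|S_i|$ by the number of ``paying vertices'' (this is the inequality $g(z)\le\sum_i h_i(z)$ in the paper). Your threshold set $S_t=\{v:|f(v)-t|\le s(v)/2\}$ is \emph{exactly} $\sum_i h_i(t)$, so you are using the paying-vertex set directly as the separator, rather than a possibly smaller minimum cut. The Lipschitz verification that no edge joins $A_t$ to $B_t$ is correct, so your $(A_t,B_t,S_t)$ is a legitimate vertex cut whenever $A_t,B_t\ne\emptyset$; and since the theorem only asks for an upper bound on $\vspars(G)$, the larger threshold cut suffices. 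This sidesteps Menger's theorem and max-flow entirely. Your two Fubini identities are correct: $\int_\R|S_t|\,dt=\sum_v s(v)$, and pairing the $(u,v)$ and $(v,u)$ ordered contributions shows $\int_\R N_A(t)N_B(t)\,dt\ge\sum_{\{u,v\}}|f(u)-f(v)|$. So far this is a clean and elementary route to the statement.

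The gap is in the degenerate-case fix you sketch at the end. Restricting the sweep to $(\alpha,\beta)$ does make $A_t,B_t$ nonempty, but it does \emph{not} preserve the lower bound $\int N_A N_B\,dt\ge\sum|f(u)-f(v)|$: the mass of $N_AN_B$ accumulated outside $(\alpha,\beta)$ can be essentially all of it. Concretely, on three vertices with $f=(0,10,20)$ and $s=(19,1,19)$ (a path with unit-weighted pseudometric distances $10,10,20$ makes $f$ 1-Lipschitz), one gets $(\alpha,\beta)=(9.5,10.5)$, hence $\int_{(\alpha,\beta)}N_AN_B\,dt=4$ while $\sum_{\{u,v\}}|f(u)-f(v)|=40$. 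So your displayed chain of inequalities breaks after you restrict: you keep the upper bound $\int_{(\alpha,\beta)}|S_t|\,dt\le\sum_v s(v)$ but lose the lower bound in the denominator. The workable fix is different: run the averaging over all of $\R$ and observe that for $t\notin(\alpha,\beta)$ with $|S_t|>0$ the ratio equals $1/n$ (one of $N_A,N_B$ equals $|S_t|$, the other equals $n$). Thus either the averaging lands inside $(\alpha,\beta)$, giving a valid cut directly, or it yields $1/n\le\sum_v s(v)/\sum|f(u)-f(v)|$; and any non-complete graph has a vertex cut of sparsity $(n-2)/(n-1)^2<1/n$ (take $A,B$ to be two non-adjacent singletons). (It is worth noting that the paper's own proof also never addresses the analogous degeneracy, namely that the minimizing $(A_i,B_i,S_i)$ might have $A_i$ or $B_i$ empty; so the issue is not special to your approach, but your proposed patch for it is the part that does not hold up.)
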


\begin{proof} The proof actually provides a polynomial-time algorithm
for finding a vertex cut with sparsity bounded as in the theorem.

Let us number the vertices of $G$ so that $f(v_1)\le f(v_2)\le\cdots
\le f(v_n)$. For every $i=1,2,\ldots,n-1$, we are going to find
a vertex cut $(A_i,B_i,S_i)$, and show that one of these will do.

To this end, given $i$, we form an auxiliary graph $G_i^+$ by
adding new vertices $x$ and $y$ to $G$, connecting $x$ to
$v_1$ through $v_i$, and $y$ to $v_{i+1}$ through $v_n$.
\immfig{j-Giplus}
We let $S_i\subseteq V$ be a minimum cut in $G_i^+$ separating 
$x$ from $y$ (which can be found using a max-flow algorithm, for example).
Let $A_i:=\{v_1,\ldots,v_i\}\setminus S_i$ and 
$B_i:=\{v_{i+1},\ldots,v_n\}\setminus S_i$,
and let
\[
\alpha:=\min_i \vspars(A_i,B_i,S_i)=\min_i \frac{|S_i|}{|A_i\cup S_i|\cdot
|B_i\cup S_i|}.
\]
Since $\{v_1,\ldots,v_i\}\subseteq A_i\cup S_i$, we have
$|A_i\cup S_i|\ge i$, and similarly $|B_i\cup S_i|\ge n-i$. Thus, for every
$i$ we have
\begin{equation}\label{e:in-i}
|S_i|\ge \alpha i(n-i).
\end{equation}

In order to prove the theorem, we want to derive
\begin{equation}\label{e:sum-uv}
\textstyle\alpha\sum_{\{u,v\}\in {V\choose 2}}|f(u)-f(v)|\le \sum_{v\in V}s(v).
\end{equation}
Setting $\eps_i=f(v_{i+1})-f(v_i)\ge 0$, we can rearrange the left-hand side:
$\alpha\sum_{i<j} (f(v_j)-f(v_i))=\alpha\sum_{i=1}^{n-1}i(n-i)\eps_i$
(we just look how many times the segment between $f(v_{i})$ and
$f(v_{i+1})$ is counted). Then, substituting from \eqref{e:in-i},
we finally bound the left-hand side of \eqref{e:sum-uv}
by $\sum_{i=1}^{n-1}\eps_i|S_i|$. It remains to prove
\begin{equation}\label{e:333}
\textstyle \sum_{i=1}^{n-1}\eps_i|S_i|\le \sum_{v\in V}s(v),
\end{equation}
and this is the most
ingenious part of the proof.

Roughly speaking, for every term $\eps_i|S_i|$, we want
to find vertices of sufficient total weight sufficiently
close to the interval $[f(v_i),f(v_{i+1})]$. We use Menger's
theorem, which guarantees that there are $|S_i|$ vertex-disjoint
paths that have to ``jump over'' the interval $[f(v_i),f(v_{i+1})]$.

More precisely, we express both sides of \eqref{e:333}
as integrals. Namely, we write $\sum_{i=1}^{n-1}\eps_i|S_i|
=\int_{-\infty}^\infty g(z)\dd z$, where $g$ is the function
that equals $|S_i|$ on $[f(v_i),f(v_{i+1}))$ and $0$ elsewhere:
\immfig{j-funcg}
Similarly, $\sum_{v\in V}s(v)=\int_{-\infty}^\infty \sum_{i=1}^n h_i(z)\dd z$,
where $h_i$ is the function equal to $1$ on $[f(v_i)-\frac{s(v_i)}2,
f(v_i)+\frac{s(v_i)}2]$ and to $0$ elsewhere:
\immfig{j-funchi}
We claim that $g(z)\le \sum_{i=1}^n h_i(z)$ for every $z\in\R$;
this will imply \eqref{e:333}. 

Let $z\in [f(v_i),f(v_{i+1})]$, and set $m=g(z)=|S_i|$. We want
to show $\sum_{i=1}^n h_i(z)\ge m$, which means that we need to find
$m$ distinct vertices $v$ such that $|f(v)-z|\le\frac{s(v)}2$;
let us call such $v$ the \emph{paying vertices} since we can
imagine that they pay for~$g(z)$.

As announced, we use Menger's theorem, which tells us that, since
$S_i$ is a minimum $x$-$y$ cut in $G_i^+$, there are $m$
$x$-$y$ paths $P_1,\ldots,P_m$ that are vertex-disjoint except
for sharing the end-vertices $x$ and $y$. Each $P_j$ contains at
least one edge $e_j=\{a_j,b_j\}$ with one endpoint among
$v_1,\ldots,v_i$ and the other among $v_{i+1},\ldots,v_n$.
Hence $z\in [f(a_j),f(b_j)]$, and since $f$ is $1$-Lipschitz,
$|f(a_j)-f(b_j)|\le d_s(a_j,b_j)
\le \frac12(s(a_j)+s(b_j))$. Thus, we have $|f(a_j)-z|\le \frac{s(a_j)}2$
or $|f(b_j)-z|\le \frac{s(b_j)}2$ (or both), and so $a_j$ or $b_j$
is a paying vertex. This gives the desired $m$ distinct paying
vertices.
\end{proof}

\localbib

\end{document}